\newtheorem{theorem}{\color{black}\indent Theorem}
\newtheorem{lemma}{\color{black}\indent Lemma}[section]
\newtheorem{proposition}{\color{black}\indent Proposition}
\newtheorem{definition}{\color{black}\indent Definition}[section]
\newtheorem{remark}{\color{black}\indent Remark}[section]
\journal{a}
\begin{document}

\begin{frontmatter}

%% Title, authors and addresses

%% use the tnoteref command within \title for footnotes;
%% use the tnotetext command for theassociated footnote;
%% use the fnref command within \author or \address for footnotes;
%% use the fntext command for theassociated footnote;
%% use the corref command within \author for corresponding author footnotes;
%% use the cortext command for theassociated footnote;
%% use the ead command for the email address,
%% and the form \ead[url] for the home page:
%% \title{Title\tnoteref{label1}}
%% \tnotetext[label1]{}
%% \author{Name\corref{cor1}\fnref{label2}}
%% \ead{email address}
%% \ead[url]{home page}
%% \fntext[label2]{}
%% \cortext[cor1]{}
%% \address{Address\fnref{label3}}
%% \fntext[label3]{}

\title{Kolmogorov's Theorem for Degenerate Hamiltonian Systems with  Continuous Parameters}

%% use optional labels to link authors explicitly to addresses:
%% \author[label1,label2]{}
%% \address[label1]{}
%% \address[label2]{}

\author{{Jiayin Du$^{a,*}$ \footnote{ E-mail address : dujy668@nenu.edu.cn} ,~Yong Li$^{a,b}$ \footnote{ E-mail address : liyong@jlu.edu.cn},~Hongkun Zhang$^{c}$ \footnote{E-mail address : hongkunz@umass.edu\\\indent ~*Corresponding author}
}\\
{$^{a}$College of Mathematics and Statistics, and Center for Mathematics and Interdisciplinary Sciences, Northeast Normal
University, Changchun, 130024, P. R. China.}\\
{$^{b}$College of Mathematics, Jilin University, Changchun, 130012, P. R. China.}\\
{$^{c}$Department of Mathematics and Statistics, University of Massachusetts, Amherst, 01003, USA.} }

\begin{abstract}
In this paper, we investigate Kolmogorov type theorems for small perturbations of degenerate Hamiltonian systems. These systems are index by a parameter $\xi$ as \( H(y,x,\xi) = \langle\omega(\xi),y\rangle + \varepsilon P(y,x,\xi,\varepsilon) \) where $\varepsilon>0$. We assume that the frequency map, $\omega$, is continuous with respect to $\xi$. Additionally, the perturbation function, $P(y,x,\cdot, \varepsilon)$, maintains H\"{o}lder continuity about $\xi$. We prove that persistent invariant tori retain the same frequency as those of the unperturbed tori, under certain topological degree conditions and a weak  convexity condition for the frequency mapping. Notably, this paper presents, to our understanding, pioneering results on the KAM theorem under such conditions-with only assumption of continuous  dependence of frequency mapping $\omega$ on the parameter.
\end{abstract}

\begin{keyword}
Hamiltonian system, invariant tori, frequency-preserving, Kolmogorov's theorem,  degeneracy, continuous parameter.\\
\MSC[2020] 37J40 \sep 70H08 \sep 70K43
\end{keyword}

%\begin{keyword}
% Delay differential system \sep Exponential dichotomy\sep Fixed point theorem.

%% PACS codes here, in the form: \PACS code \sep code

%% or \MSC[2008] code \sep code (2000 is the default)

%\end{keyword}

\end{frontmatter}

%% \linenumbers

%% main text

\tableofcontents

\section{Introduction}\label{sec:1}
\setcounter{equation}{0}
\setcounter{definition}{0}
\setcounter{proposition}{0}
\setcounter{lemma}{0}
\setcounter{remark}{0}
As a conservation law of energy, Hamiltonian systems are frequently considered to describe models arose in celestial mechanics or the motion of charged particles in magnetic fields, see \cite{chierchia,meyer3,wayne}.

The classical KAM theory, as presented by Arnold, Kolmogorov, and Moser \cite{arnold,kolmogorov,moser}, posits that under the Kolmogorov non-degenerate condition, most invariant tori of an integrable Hamiltonian system can withstand small perturbations. While these tori might undergo minor deformations, they transform into other invariant tori that retain the original frequency

%More precisely, it was proved that the Kolmogorov non-degenerate condition, i.e.,
%\begin{align}\label{eqa1}
%\det\frac{\partial^2h(y)}{\partial y^2}\neq0,~~~\forall y\in G.
%\end{align}, given any frequency vector $\omega$ satisfying the following Diophantine condition:
%\begin{align}\label{eqa2}
%|\langle k,\omega\rangle|>\frac{\gamma}{|k|^\tau},~~~k\in{\mathbb{Z}^n\setminus{\{0\}}},
%\end{align}
%where $k=(k_1,\cdots,k_n)$, $|k|=|k_1|+\cdots+|k_n|$, $\gamma>0$ and $\tau> n-1$,

%the associated unperturbed quasi-periodic tori persist, being only slightly deformed into another quasi-periodic tori with the same frequency.

%under the Kolmogorov non-degenerate condition, given any Diophantine frequency $\omega$, , being only slightly deformed into another invariant tori with the same frequency $\omega$.

%Under some non-degenerate conditions, the persistence of invariant tori with the preservation of the toral frequency for Hamiltonian system has been studied by many different methods, e.g., see \cite{benettin,bounemoura,de,li,poschel,salamon} for the KAM approach, \cite{chierchia1,eliasson,gallavotti} for the direct method using Lindstedt series, \cite{bricmont,gallavotti1} via renormalization groups techniques. The paper \cite{chow} came up with the conception of partial preservation of unperturbed frequencies and studied the persistence problem on a given smooth sub-manifold for real analytic Hamiltonian systems under R\"{u}ssmann-like non-degenerate condition.  For results under similar conditions, also see \cite{sevryuk}.

Numerous methods have been explored to study the persistence of invariant tori and the preservation of toral frequency within Hamiltonian systems under certain non-degenerate conditions. For instance, the KAM approach was used in \cite{benettin,bounemoura,de,li,poschel,salamon}. The direct method using Lindstedt series can be found in references \cite{chierchia1,eliasson,gallavotti}, while renormalization group techniques were discussed in \cite{bricmont,gallavotti1}. Notably, the study presented in \cite{chow} introduced the idea of partial preservation of unperturbed frequencies and delved into the persistence problem on a specified smooth sub-manifold for real analytic Hamiltonian systems, particularly under the R\"{u}ssmann-like non-degenerate condition. For insights under analogous conditions,  see also \cite{sevryuk}.

%More precisely, it was proved that given any frequency vector $\omega$ satisfying the following Diophantine condition:
%\begin{align}\label{eqa2}
%|\langle k,\omega\rangle|>\frac{\gamma}{|k|^\tau},~~~k\in{\mathbb{Z}^n\setminus{\{0\}}},
%\end{align}
%where $k=(k_1,\cdots,k_n)$, $|k|=|k_1|+\cdots+|k_n|$, $\gamma>0$ and $\tau> n-1$, the associated unperturbed quasi-periodic tori persist, being only slightly deformed into another quasi-periodic tori with the same frequency.

%However, there are the following two basic questions in the persistence, and we think they should be taken into consideration:

%Q1: What happens if the Kolomogorov non-degenerate condition fails? Whether the invariant tori with the same frequency can  be preserved under small perturbations?

%Q2: When the regularity of the frequency mapping weakens to continuous, does the above result can be preserved under small perturbations?

%Let us recall some previous results and make more complete presentation.

Yet, in the context of persistence, two fundamental questions emerge that warrant attention:

\noindent \textbf{Q1}: In the event of a failure in the Kolmogorov non-degenerate condition, can the invariant tori with the same frequency still be preserved under small perturbations?

\noindent \textbf{Q2}: If the regularity of the frequency mapping diminishes to mere continuity, can the aforementioned result withstand small perturbations?

To shed light on these questions, we review previous findings and offer a more comprehensive overview.

\subsection{Degeneracy}
Consider the real analytic nearly integrable Hamiltonian system\begin{align}\label{eq1}
H(y,x,\varepsilon )=h(y)+\varepsilon P(y,x,\varepsilon),
\end{align}
where $x$ is the angle variable in  the standard torus $\mathbb{T}^{n}$,  $n$ refers to the dimension; $y$ is the action variable in a bounded closed region $G\subset \mathbb{R}^n$, and $\varepsilon>0$ is a small parameter.

A fundamental assumption in historical research is the Kolmogorov non-degenerate condition. However, if we assume that there exists a  $y_0\in G$ such that,
\begin{align*}
\det\frac{\partial^2h(y_0)}{\partial y^2}=0,
\end{align*}
then the Kolmogorov condition is not satisfied. The spatial solar system serves as a prominent example of this situation, as detailed in \cite{fejoz}. Naturally, a question arises: does the persistence result still stand under these conditions? This question has been a primary motivation for this research.

%Indeed, even under some weaker non-degenerate conditions, KAM tori may not preserve frequencies. For example, it was shown in \cite{bruno,russmann2,sevryuk1}, that under the Brjuno non-degenerate condition %$$rank(\omega,\frac{\partial\omega}{\partial y})=n$$
%and R\"{u}ssmann non-degenerate condition, %$$a_1\omega_1+\cdots+a_n\omega_n\neq0,~~\forall (a_1,\cdots,a_2)\in \mathbb{R}^n\setminus\{0\},$$
%the persistence results of keeping frequency unchanged may not be hold, because the frequency of perturbed tori can have some a small drift. Similar observations can be found in   \cite{biasco,cheng,chow,cong,heinz,qian1,xu4}. \textit{Thus it is more difficult to construct conditions, which guarantees the persistence of frequencies for KAM tori, when the underlying Hamiltonian is degenerate.}  Moreover, the problem that the frequency of perturbed invariant tori keep unchanged was rarely addressed for degenerate systems.

In fact, even under weaker non-degenerate conditions, KAM tori might not preserve their frequencies. As demonstrated in \cite{bruno,russmann2,sevryuk1}, under the Brjuno non-degenerate condition and R\"{u}ssmann non-degenerate condition, the presumption of an unchanged frequency may not necessarily hold true. This is because the frequency of perturbed tori can undergo slight variations. Similar observations are noted in \cite{biasco,cheng,chow,cong,heinz,qian1,xu4}. Consequently, deriving conditions that assure the persistence of frequencies for KAM tori in the context of a degenerate Hamiltonian becomes rather challenging. Furthermore, the issue of the perturbed invariant tori maintaining a consistent frequency has seldom been tackled for degenerate systems.

\subsection{Regularity}
%\indent\indent For the regularity problem, we notice that Kolmogorov \cite{kolmogorov} and Arnold \cite{arnold} studied real analytic Hamiltonian systems, while Moser \cite{moser} showed that Hamiltonian systems need not be analytic, it is sufficient for the Hamiltonian to be of finite but sufficiently high regularity.
%The smoothness assumption was later brought down to $C^5$ in \cite{russmann1}. Some related important results in this aspect can be seen in \cite{bounemoura,koud,herman,salamon}.
%The case of frequency mapping with Lipschitz continuous parameters is studied in \cite{poschel1}. Further, what happens if the regularity of the frequency mapping weakens to continuous with respect to the parameters?

On the matter of regularity, it's worth noting the distinctions in the studies of various researchers. Kolmogorov \cite{kolmogorov} and Arnold \cite{arnold} focused on real analytic Hamiltonian systems. In contrast, Moser \cite{moser} illustrated that Hamiltonian systems don't necessarily need to be analytic; a high, albeit finite, level of regularity for the Hamiltonian suffices. This regularity requirement was later reduced to \(C^5\) in work by \cite{russmann1}. Further important contributions on this topic can be found in \cite{bounemoura,koud,herman,salamon}.
Moreover, the scenario where the frequency mapping has Lipschitz continuous parameters has been explored in \cite{poschel1}. A subsequent question of interest is: what are the implications when the regularity of the frequency mapping is merely continuous with respect to its parameters?

More precisely, we consider a family of Hamiltonian systems under small perturbations:
\begin{equation}\label{eqb1}
H(y,x,\xi,\varepsilon)=\langle\omega(\xi),y\rangle+\varepsilon P(y,x,\xi,\varepsilon),
\end{equation}
where $(y,x)\in G\times\mathbb{T}^n$ and $\xi$ is a parameter in a bounded closed region $O\subset \mathbb{R}^n$. The function $\omega(\cdot)$ is continuous with respect to $\xi$ on $O$. The function $P(\cdot,\cdot,\xi,\varepsilon)$ is real analytic with respect to $y$ and $x$, and $P(y,x,\cdot,\varepsilon)$ is H\"{o}lder continuous with respect to the parameter $\xi$ with H\"{o}lder index $\beta$, for some $0<\beta<1$. Additionally, $\varepsilon>0$ is a small parameter.

It's important to note that in the conventional KAM iteration process, the regularity of the frequency mapping concerning the parameters must be at least Lipschitz continuous. This ensures that the parameter domain remains intact. However, when the regularity of the frequency mapping is less stringent than Lipschitz continuous, the traditional method of parameter excavation becomes infeasible. This necessitates the exploration of novel approaches to address the issue.

%Poschel [24] studied the case of frequency mapping with Lipschitz continuous parameters. Further, what happens if the regularity of the frequency map is reduced to continuous with respect to the parameters?

\subsection{Our work}
\indent\indent

Regarding regularity, when the frequency mapping is continuous with respect to parameters, we prove  that the perturbed invariant tori retain the same Diophantine frequency as their unperturbed counterparts for Hamiltonian systems as described in (\ref{eqb1}), see Theorem \ref{th1}. For the degeneracy problem, persistence results under the highly degenerate Hamiltonian system (\ref{eq1}) are proved in Theorem \ref{th2}.

We establish sufficient conditions based on the topological degree condition $\rm(A0)$ and the weak convexity condition $\rm(A1)$ for frequency mapping. Detailed descriptions of these conditions are provided in Section 2. In deriving our primary results, we employ the quasi-linear KAM iteration procedure as in  \cite{chow,han,li1,qian2}. Notably, we introduce a  parameterized family Hamiltonian systems to counteract frequency drift. Specifically, we adjust the action variable to maintain constant frequency for the highly degenerate Hamiltonian system (\ref{eq1}).   It's also noteworthy that the weak convexity condition proposed in this paper is necessary  regardless of the smoothness level of the frequency mapping, as evidenced by Proposition \ref{pro1}.

{It should be pointed out that the KAM-type theorems associated with parameter family are due to Moser \cite{moser1967}, P\"{o}schel\cite{poschel1989}. However, our results are different from theirs: a Diophantine frequency can be given in advance, but Moser's systems need to be modified in KAM iteration and hence cannot be given beforehand; in P\"{o}schel's approach, the frequency set need to be dug out in KAM process. Our method is to find a parameter in the family of systems by translating parameter. Of course, it does not work generally. As pointed out in our paper, the weak convexity condition (A1) is indispensable. To our knowledge, this setting seems to be first.}

The rest of this paper is organized as follows. In Section \ref{sec:b2}, we state our main results (Theorems \ref{th1}, \ref{th2}, \ref{pro2} and \ref{th3}). We will describe the quasi-linear iterative scheme,  show the detailed construction and estimates for one cycle of KAM steps in Section \ref{sec:b3}. In Section \ref{sec:b4}, we complete the proof of Theorem \ref{th1} by deriving an iteration lemma and showing the convergence of KAM iterations. In Section \ref{sec:b5}, we prove Theorem \ref{th2} , which covers the analytic situation, and is also a special case  of Theorem \ref{th1}. We also  prove Theorem \ref{th3} by directly computing. Finally, the proof of Theorem \ref{pro2} can be found in Appendix B.

\section{Main results}\label{sec:b2}

To state our main results we need first to introduce a few definitions and notations.
\begin{description}
\item[\rm{(1)}] Given a domain $D\subset G\times \mathbb{T}^n$, we let  $\bar{D}$, $\partial D$ denote the closure of $D$ and the boundary of $D$,  respectively. $D^o:=\bar{D}\setminus\partial D$ refers to the interior.
\item[\rm(2)] We shall use the same symbol $\vert\cdot\vert$ to denote an equivalent vector norm and its induced matrix norm, absolute value of functions, etc, and use $\vert\cdot\vert_D$ to denote the supremum norm of functions on a domain $D$.
\item[\rm(3)] For the perturbation function $P(y,x,\xi)$, which is analytic about $y$ and $x$ and H\"{o}lder continuous about $\xi$ with H\"{o}lder index $\beta$, $0<\beta<1$, we define its norm as follows
  $$\vert\|P\|\vert_{D}=\|P\|_{D}+\|P\|_{C^\beta}$$ where
  \begin{equation}\label{beta}\|P\|_{C^\beta}=\sup_{\xi\neq\zeta,~\xi,\zeta\in {O}}\frac{\vert P(y,x,\xi)-P(y,x,\zeta)\vert}{\vert\xi-\zeta\vert^\beta},~~~\forall (y,x)\in {D}.\end{equation}
\item[\rm(4)] For any two complex column vectors $\xi$, $\eta$ in the same space, $\langle\xi,\eta\rangle$ always stands for $\xi^\top\eta$.
\item[\rm(5)] $i_d$ is the unit map, and $I_d$ is the unit matrix.
\item[\rm(6)] For a vector value function $f$, $Df$ denotes the Jacobian matrix of $f$, and $J_f=det Df$ its Jacobian determinant.
\item[\rm(7)] All Hamiltonian in the sequel are endowed with the standard symplectic structure.
\item[\rm(8)] As pointed out in \cite{poschel}, the real  analyticity of the Hamiltonian $H(y,x)$ about $y$ and $x$ on $G\times{\mathbb{T}^n}$  implies that the analyticity extends to a complex neighbourhood $D(s,r)$ of $G\times{\mathbb{T}^n}$, where $D(s,r)$ is defined for some $0<s,r<1$, with
$$D(s,r):=\{(y,x):dist(y,G)<s,\vert\textrm{Im}x\vert<r\}.$$ %In the following, "real analytic on $\bar{D}$" means that the analyticity extends to a uniform neighborhood of $D$.
\item[\rm(9)] For $\forall \delta>0$, $y_0\in G$, let
\begin{align*}
B_\delta(y_0)&=:\{y\in G:\vert y-y_0\vert<\delta\},\\
\bar B_\delta(y_0)&=:\{y\in G:\vert y-y_0\vert\leq\delta\}.
\end{align*}

\end{description}

Let $\Omega\subset \mathbb{R}^n$ be a bounded and open domain. We first give the definition of the degree for $f\in C^2(\bar\Omega,\mathbb{R}^n)$, see \cite{guo}.
\begin{definition}
If $f\in C^2(\bar\Omega,\mathbb{R}^n)$, and $p\in \mathbb{R}^n\setminus f(\partial\Omega)$, let $\varsigma=\inf_{x\in\partial\Omega}\left\|f(x)-p\right\|$.

\begin{description}
\item[\rm(1)] Denote $N_f=\left\{x\vert x\in\Omega,J_f(x)=0\right\}$. If $p\notin f(N_f)$, then
\begin{align*}
\deg(f,\Omega,p):=\sum_{x\in { f^{-1}(p)}} sign\left(J_{f}(x)\right),
\end{align*}
setting $\deg\left(f,\Omega,p\right)=0$ if $f^{-1}\left(p\right)={\O}$.
\item[\rm(2)] If $p\in f(N_f)$, by Sard theorem (see \cite{guo}),  $\deg\left(f,\Omega,p\right)$ is  defined as $\deg(f,\Omega,p):=\deg(f,\Omega,p_1)$, for some (and all) $p_1\notin f(N_f)$ such that $\left\|p_1-p\right\|<\frac{\varsigma}{7}$.
\end{description}
\end{definition}
We next extend the definition of degree to the general continuous mapping, see \cite{guo}.
\begin{definition}(Brouwer's Degree)
Let $g\in C(\bar\Omega,\mathbb{R}^n)$, and $p\in \mathbb{R}^n\setminus g(\partial\Omega)$, i.e., $\varsigma_*=:\inf_{x\in\partial\Omega}\left\|g(x)-p\right\|>0$.  Let
$$S=:\left\{f\Big\vert f\in C^2\left(\bar\Omega,\mathbb{R}^n\right),  \max_{x\in\bar\Omega}\left\{\left\|g(x)-f(x)\right\|<\varsigma_*\right\}\right\}.$$
Define the degree of $g$ as :
$$\deg{(g,\Omega,p)}:=\deg{(f,\Omega,p)},~~~\forall f\in S.$$
\end{definition}
We are now ready to state our assumptions. Mainly we
consider (\ref{eqb1}), i.e., for any $\varepsilon>0$ small enough,  we consider the parameterized family of perturbed Hamiltonian equations
\begin{equation*}
\left\{
\begin{array}{ll}
H:G\times \mathbb{T}^n\times O\rightarrow \mathbb{R}^1,\\
H(y,x,\xi)=\langle\omega(\xi),y\rangle+\varepsilon P(y,x,\xi,\varepsilon).
\end{array}
\right.
\end{equation*}

First, we make the following assumptions:
\begin{description}
\item[\rm(A0)]
Fix $\xi_0\in O^o$ such that
\begin{align}\label{A0}
&\deg\left(\omega(\cdot), O^o, \omega(\xi_0)\right)\neq0.
\end{align}
\item[\rm(A1)] There are $\sigma>0$, $0<L\leq\beta$, ($\beta$ was defined in (\ref{beta})), such that
\begin{align}\label{A1}
 \left\vert\omega(\xi)-\omega(\xi_{*})\right\vert\geq \sigma\left\vert\xi-\xi_{*}\right\vert^L,~~~~\forall \xi,\xi_{*}\in O. \end{align}
\item[\rm(A2)] For the given $\xi_0\in O^o$, $\omega(\xi_0)$ satisfies the Diophantine condition
\begin{align}\label{eqa2}
\left\vert\langle k,\omega(\xi_0)\rangle\right\vert>\frac{\gamma}{\vert k\vert^\tau},~~~k\in{\mathbb{Z}^n\setminus{\{0\}}},
\end{align}
where $k=(k_1,\cdots,k_n)$, $\vert k\vert=\left\vert k_1\right\vert+\cdots+\left\vert k_n\right\vert$, $\gamma>0$ and $\tau> n-1$.
\end{description}

Then, we have the following main results:
\begin{theorem}\label{th1}
Consider Hamiltonian system {\rm(\ref{eqb1})}.
Assume that ${\rm(A0)}$ , ${\rm(A1)}$ and ${\rm(A2)}$ hold.
{{Then there exists a sufficiently small  $\varepsilon_0>0$,  for any $0<\varepsilon<\varepsilon_0$, there exist $\xi_\varepsilon\in O$ and {a symplectic transformation $\Psi_*$ such that
$$H(\Psi_*(y,x,\xi_\varepsilon),\varepsilon)=e_*+\langle\omega(\xi_0),y\rangle+\bar h_*(y,\xi_{\varepsilon})+P_*(y,x,\xi_\varepsilon,\varepsilon)$$
where $e_*$ is a constant, $\bar h_*(y,\xi_{\varepsilon})=O(\vert y\vert^2)$, $P_*=O(\vert y\vert^2)$.}
%$\bar h_*$ is the high order terms about $y, x$.
%$\bar h_*(y,\xi_{\varepsilon})=O(\vert y\vert^2)$, $\partial_y\partial_xP_*\mid_{(y,x)=(0,0)}=0$,
Thus the perturbed Hamiltonian system $H(y,x,\xi_\varepsilon,\varepsilon)$ admits an invariant torus with frequency $\omega(\xi_0)$.}}
\end{theorem}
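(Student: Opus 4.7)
The plan is to run a quasi-linear KAM scheme that at every step keeps the frequency of the approximating torus equal to the target $\omega(\xi_0)$, not by excavating a Cantor set of parameters (which is impossible here since $\omega$ is only continuous), but by \emph{translating} the parameter $\xi$ so as to absorb the drift produced by the previous step. After $n$ cycles one aims at a Hamiltonian of the shape
$$H_n(y,x,\xi)=e_n+\langle\omega(\xi_0),y\rangle+\bar h_n(y,\xi)+P_n(y,x,\xi,\varepsilon),$$
with $\bar h_n(y,\xi)=O(|y|^2)$ and $P_n$ super-exponentially small on shrinking analyticity neighborhoods $D(s_n,r_n)$ while remaining H\"older in $\xi$ with index $\beta$. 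The torus $\{y=0\}$ then already carries the linear flow $\dot x=\omega(\xi_0)$, so the whole task reduces to killing $P_n$.

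In one KAM cycle, truncate $P_n$ at Fourier order $K_n$, split off its $x$-average $[P_n]$, and solve the homological equation
$$\langle\omega(\xi_0),\partial_x F_n\rangle=P_n-[P_n]-\text{truncation remainder},$$
which is solvable on $D(s_n,r_n)$ thanks to the Diophantine condition (A2). Pushing forward with the time-one map $\Psi_n$ of $X_{F_n}$, the updated linear part of $H$ has frequency $\omega(\xi)+\varepsilon\widetilde\omega_n(\xi)$, where $\widetilde\omega_n$ is H\"older continuous in $\xi$ and small by induction. Setting the new linear frequency equal to $\omega(\xi_0)$ amounts to solving
$$\omega(\xi_{n+1})+\varepsilon\widetilde\omega_n(\xi_{n+1})=\omega(\xi_0),\qquad \xi_{n+1}\in O^o.$$
Existence of such $\xi_{n+1}$ is provided by the topological degree hypothesis (A0) together with homotopy invariance of the Brouwer degree: since $\varepsilon\widetilde\omega_n$ is a small continuous perturbation of $\omega$, the degree of the perturbed map at $\omega(\xi_0)$ is still the nonzero value prescribed by (A0), so the equation has a solution in $O^o$.

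Convergence of the scheme has two pieces. The analytic piece is standard: composing the $\Psi_n$'s on shrinking domains and controlling the error in $|\cdot|_{D(s_n,r_n)}$ via the norm $\vert\|\cdot\|\vert$ of Section 2 yields a limit symplectic map $\Psi_*$ and a residual $P_*=O(|y|^2)$. The parameter piece is where condition (A1) is essential: from
$$\omega(\xi_{n+1})-\omega(\xi_n)=\varepsilon\bigl(\widetilde\omega_{n-1}(\xi_n)-\widetilde\omega_n(\xi_{n+1})\bigr),$$
the right-hand side is of the order of the accumulated KAM error and hence geometrically summable, while (A1) gives the quantitative inversion
$$|\xi_{n+1}-\xi_n|\le\sigma^{-1/L}|\omega(\xi_{n+1})-\omega(\xi_n)|^{1/L}.$$
Thus $\{\xi_n\}$ is Cauchy, its limit $\xi_\varepsilon\in O$ is the parameter claimed in the statement, and passing to the limit in $H_n(\Psi_0\circ\cdots\circ\Psi_n(y,x,\xi_{n+1}))$ produces the desired Kolmogorov normal form.

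The main obstacle, and the place where care will be needed, is the interplay between the two pieces just described. Without any Lipschitz estimate on $\omega$ we cannot use a contraction/implicit-function argument to produce $\xi_{n+1}$ and must rely on the purely topological solvability via (A0); but degree theory alone provides no rate, and a rate is exactly what we need to keep the translated parameter inside $O^o$ throughout the iteration and to guarantee that the $\xi_n$ converge fast enough relative to the shrinking domains $D(s_n,r_n)$. The weak convexity (A1) is what converts the topological existence into a quantitative H\"older bound on $|\xi_{n+1}-\xi_n|$, so the smallness threshold $\varepsilon_0$ must be chosen to make the geometric scheme $(s_n,r_n,|P_n|)$ consistent with the H\"older-$1/L$ loss imposed by (A1). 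Matching these two rates — KAM shrinkage on one side, degree-plus-weak-convexity translation on the other — is the technical heart of the argument.
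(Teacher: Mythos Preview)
Your outline is the paper's proof: truncate, solve the homological equation, push forward by the time-one map $\phi_F^1$, then translate the parameter via (A0) to restore the target frequency, and iterate; convergence of $\{\xi_n\}$ is extracted from (A1). One point, however, is not closed as you have written it. In the inversion step you assert
\[
|\xi_{n+1}-\xi_n|\le\sigma^{-1/L}\,|\omega(\xi_{n+1})-\omega(\xi_n)|^{1/L}
\]
with the right-hand side ``of the order of the accumulated KAM error''. But from your own identity $\omega(\xi_{n+1})-\omega(\xi_n)=\varepsilon(\widetilde\omega_{n-1}(\xi_n)-\widetilde\omega_n(\xi_{n+1}))$, the right side splits into the \emph{new} drift $p_{01}^n(\xi_{n+1})$ (genuinely of order $\mu_n$) plus the differences $\sum_{j<n}\bigl(p_{01}^j(\xi_n)-p_{01}^j(\xi_{n+1})\bigr)$, which depend on $|\xi_{n+1}-\xi_n|^\beta$ through the H\"older norms of the earlier drifts. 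So the displayed inequality is implicit in $|\xi_{n+1}-\xi_n|$, not closed.

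The paper's Lemma~\ref{leb3} closes it by moving those H\"older terms to the left and invoking the hypothesis $0<L\le\beta$ in (A1):
\[
\Bigl(\sigma-c\sum_{j<n}\mu_j\Bigr)\,|\xi_{n+1}-\xi_n|^L
\;\le\;|p_{01}^n(\xi_{n+1})|\;\le\;c\,\mu_n,
\]
since $|\xi_{n+1}-\xi_n|^\beta\le|\xi_{n+1}-\xi_n|^L$ for small increments. This is precisely the ``matching of rates'' you flag at the end, and the constraint $L\le\beta$ is what makes the bookkeeping consistent; without it the sum of old H\"older corrections could swamp the (A1) lower bound and the scheme would not close. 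Apart from this point (and the minor fact that the homological divisor is $\langle k,\omega(\xi_0)+\partial_y\bar h\rangle$ rather than $\langle k,\omega(\xi_0)\rangle$, which costs nothing since $\partial_y\bar h=O(|y|)$ on $D(s)$), your plan coincides with the paper's.
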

\begin{remark}
It should be emphasized that we deal with the degenerate Hamiltonian system in which the frequency mapping is continuous about parameters and the perturbation is H\"{o}lder continuous about parameters in this theorem. It seems to be the first version in KAM theory.
\end{remark}

In the following, we will give some examples to state that conditions ${\rm(A0)}$ and ${\rm(A1)}$ are indispensable, especially for condition ${\rm(A1)}$. See below for a counter example:
\begin{proposition}\label{pro1}
Consider the Hamiltonian system {\rm(\ref{eqb1})}, for $n=2$,  with $$\omega(\xi)=(\omega_1(\xi_1),\omega_2(\xi_2))^\top,~~~\varepsilon P=P_0(\varepsilon)y_2,$$
where
\begin{align*}
&\omega_1(\xi_1)=\bar{\omega}_1+\xi_1,~~\xi_1\in(-1,1),\\
&\omega_2{(\xi_2)}=\left\{\begin{array}{lll}
\bar{\omega}_2+\exp\{-\frac{1}{(\xi_2+{\frac{1}{2}})^2}\},~~~&\xi_2\in(-1,-\frac{1}{2}),\\
\bar{\omega}_2,~~~&\xi_2\in[-\frac{1}{2},\frac{1}{2}],\\
\bar{\omega}_2-\exp\{-\frac{1}{(\xi_2-{\frac{1}{2}})^2}\},~~~&\xi_2\in(\frac{1}{2},1),
                           \end{array}\right.
\end{align*}
$\bar{\omega}=(\bar{\omega}_1,\bar{\omega}_2)^\top$ satisfies Diophantine condition {\rm(\ref{eqa2})}, and
$$P_0(\varepsilon)=\left\{\begin{array}{lll}
                              0,&&\varepsilon=0, \\
                              \varepsilon^\ell\sin\frac{1}{\varepsilon},&&\varepsilon\neq0,\,\ell\in \mathbb{Z}^+\setminus\{0\}.
                            \end{array}\right.$$
Then condition $\rm(A1)$ fails for any parameter $\xi\in(-1,1)$. Moreover, Theorem \ref{th1} fails.
\end{proposition}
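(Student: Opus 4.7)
The plan is to verify the failure of (A1) by inspection and then to show Theorem~\ref{th1} fails by a direct algebraic reduction, exploiting the fact that the example is already in a normal form.

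Failure of (A1) is immediate: since $\omega_2(\xi_2) \equiv \bar\omega_2$ on $[-1/2, 1/2]$, any two distinct points $\xi := (\xi_1, a)$, $\xi_* := (\xi_1, b)$ with $a, b \in [-1/2, 1/2]$ give $|\omega(\xi) - \omega(\xi_*)| = 0 < \sigma|\xi - \xi_*|^L$ for every $\sigma, L > 0$. Hence (A1) cannot hold on $O$; in fact it fails in any neighborhood of every $\xi$ whose second coordinate lies in the plateau.

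For the failure of Theorem~\ref{th1}, I would first observe that the Hamiltonian is already integrable and linear in $y$,
\[
H(y,x,\xi,\varepsilon) = \langle \omega(\xi) + (0, P_0(\varepsilon))^\top,\, y\rangle,
\]
so every level set $\{y=c\}$ is an invariant $2$-torus carrying the rotation vector $\omega(\xi) + (0, P_0(\varepsilon))^\top$. A near-identity symplectic change of coordinates---the only kind supplied by the KAM iteration underlying Theorem~\ref{th1}---preserves the rotation vector of an invariant torus, so unwinding the normal-form statement forces the algebraic identity
\[
\omega(\xi_\varepsilon) + (0, P_0(\varepsilon))^\top = \omega(\xi_0).
\]
Fixing $\xi_0 := (\xi_{0,1}, 0) \in O^o$ so that $\omega(\xi_0) = (\omega_1(\xi_{0,1}), \bar\omega_2)$ is Diophantine, this identity splits as $\xi_{1,\varepsilon} = \xi_{0,1}$ together with $\omega_2(\xi_{2,\varepsilon}) = \bar\omega_2 - P_0(\varepsilon)$.

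I would then exploit the sign oscillation of $P_0$. Along $\varepsilon_k := ((2k+\tfrac{1}{2})\pi)^{-1}$, $P_0(\varepsilon_k) = \varepsilon_k^\ell > 0$ forces $\xi_{2,\varepsilon_k} \in (1/2, 1)$ (the only region where $\omega_2$ dips below $\bar\omega_2$); along $\tilde\varepsilon_k := ((2k-\tfrac{1}{2})\pi)^{-1}$, $P_0(\tilde\varepsilon_k) < 0$ drives $\xi_{2,\tilde\varepsilon_k}$ into $(-1, -1/2)$. Along both subsequences $|\xi_\varepsilon - \xi_0| \geq 1/2$ uniformly as $\varepsilon \to 0$, so the frequency-preserving parameter cannot be obtained as an $\varepsilon$-small translation of $\xi_0$, and $\xi_\varepsilon$ does not converge to $\xi_0$ at all---contradicting the persistence asserted by Theorem~\ref{th1}. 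The main obstacle is conceptual rather than computational: reducing the normal-form conclusion of Theorem~\ref{th1} to the algebraic identity above. For completeness one should also check that (A0) and (A2) still hold, so that (A1) is genuinely the only failing hypothesis: (A0) follows by computing the Brouwer degree of $\omega$ at $\omega(\xi_0)$ through a nearby regular value (each coordinate of $\omega$ is monotone off the plateau, giving a singleton preimage with definite-sign Jacobian), while (A2) is imposed by the Diophantine choice of $\bar\omega$.
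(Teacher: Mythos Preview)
Your proposal is correct and follows essentially the same approach as the paper: both arguments identify the plateau of $\omega_2$ on $[-1/2,1/2]$ as the reason (A1) fails, reduce the frequency-preservation conclusion to the algebraic equation $\omega(\xi_\varepsilon)+(0,P_0(\varepsilon))^\top=\omega(\xi_0)$, and then use the sign oscillation of $P_0(\varepsilon)$ to force $\xi_{2,\varepsilon}$ to alternate between $(-1,-1/2)$ and $(1/2,1)$. The only minor difference is that the paper verifies (A0) via Borsuk's theorem (using the oddness of $\omega-\bar\omega$), whereas you sketch a regular-value computation; both are valid.
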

See Appendix A for the complete proof.

\begin{remark}
This counter example implies that $\rm(A1)$ is necessary no matter how smooth the frequency mapping $\omega(\xi)$ is.
\end{remark}

Nevertheless, one asks what happens to the frequency mapping in the analytic situation.  As a special case of our Theorem \ref{th1}, we also obtain the Kolmogorov's theorem for analytic Hamiltonian systems under degenerate conditions. This is stated in the following theorem.

\begin{theorem}\label{th2}
Consider the real analytic Hamiltonian system  {\rm(\ref{eq1})}. Fix $\xi_0\in G$ such that $\rm(A0)$, $\rm(A1)$ and $\rm(A2)$ hold for $\omega(\xi)=\nabla h(\xi)$, $O=G$, and $L>0$.
Then there exist a sufficiently small positive constant $\varepsilon'>0$ such that if  $0<\varepsilon<\varepsilon'$, there exists $y_\varepsilon\in G$ such that Hamiltonian system {\rm(\ref{eq1})} at $y=y_\varepsilon$ admits an invariant torus with frequency $\nabla h(\xi_0)$.
\end{theorem}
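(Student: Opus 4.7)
The plan is to reduce Theorem~\ref{th2} to Theorem~\ref{th1} by introducing a reference action $y_*\in G$ as an artificial continuous parameter, thereby embedding the non-parametric system (\ref{eq1}) into a parametric family of the form (\ref{eqb1}).

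First, I would translate the action variable by $y=y_*+\tilde y$ and use Taylor's formula to write
\[
h(y_*+\tilde y)=h(y_*)+\langle\nabla h(y_*),\tilde y\rangle+\mathcal{R}(\tilde y,y_*), \qquad \mathcal{R}(\tilde y,y_*)=O(|\tilde y|^2),
\]
where $\mathcal{R}$ is real analytic in both arguments because $h$ is. Discarding the dynamically irrelevant constant $h(y_*)$ and setting $\omega(y_*):=\nabla h(y_*)$, (\ref{eq1}) becomes
\[
H=\langle\omega(y_*),\tilde y\rangle+\mathcal{R}(\tilde y,y_*)+\varepsilon P(y_*+\tilde y,x,\varepsilon).
\]
Next, I would apply the canonical rescaling $\tilde y=\sqrt{\varepsilon}\,\hat y$ with $\hat H:=H/\sqrt{\varepsilon}$; a direct check shows the equations of motion in $(\hat y,x)$ remain canonical and the time variable is unchanged. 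Since $\mathcal{R}(\sqrt{\varepsilon}\hat y,y_*)=O(\varepsilon|\hat y|^2)$, one obtains
\[
\hat H=\langle\omega(y_*),\hat y\rangle+\sqrt{\varepsilon}\,\hat P(\hat y,x,y_*,\varepsilon),
\]
with $\hat P$ real analytic in $(\hat y,x)$ on a fixed complex neighborhood $D(s,r)$, bounded uniformly as $\varepsilon\to 0^+$, and analytic (hence H\"{o}lder continuous of any index) in the parameter $y_*\in G$.

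The rescaled system is then precisely of the parametric form (\ref{eqb1}) with parameter $\xi=y_*\in O:=G$, small parameter $\varepsilon'=\sqrt{\varepsilon}$, and frequency map $\omega(\xi)=\nabla h(\xi)$. Conditions (A0), (A1), (A2) hold by hypothesis, and the H\"{o}lder regularity of $\hat P$ in $\xi$ is automatic from analyticity. Applying Theorem~\ref{th1} produces a parameter value $\xi_\varepsilon\in G$ such that $\hat H$ carries an invariant torus of frequency $\omega(\xi_0)=\nabla h(\xi_0)$. Setting $y_\varepsilon:=\xi_\varepsilon$ and undoing the translation and rescaling produces the sought invariant torus for (\ref{eq1}) near $y=y_\varepsilon$, with unchanged frequency because the rescaling preserves the time variable.

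The main obstacle is ensuring that the combined perturbation $\hat P$ is bounded on a fixed analyticity domain uniformly in $\varepsilon$. This is precisely the reason for the rescaling factor $\sqrt{\varepsilon}$: it balances the second-order Taylor remainder $\mathcal{R}/\sqrt{\varepsilon}$ against the term $\sqrt{\varepsilon}P$, producing a genuine small-perturbation problem to which Theorem~\ref{th1} applies verbatim. Once this uniform bound is in hand, every hypothesis of Theorem~\ref{th1} transfers intact from the hypotheses of Theorem~\ref{th2}.
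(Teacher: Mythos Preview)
Your reduction is clean in spirit, but there is a genuine gap: Theorem~\ref{th1} as stated requires the exponent in {\rm(A1)} to satisfy $0<L\le\beta$ with $\beta<1$ the H\"older index of the perturbation in the parameter, whereas Theorem~\ref{th2} is formulated for arbitrary $L>0$. Analyticity of $\hat P$ in $y_*$ only gives you H\"older regularity of every index $\beta<1$, so your black-box application of Theorem~\ref{th1} covers at most the range $L<1$. The degenerate examples the paper cares about---for instance $h(y)=\langle\omega,y\rangle+\frac{1}{2l+2}|y|^{2l+2}$, where the weak convexity exponent is $L=2l+1\ge3$---fall outside that range, and the inequality $|\xi_+-\xi|^\beta\le|\xi_+-\xi|^L$ used in Lemma~\ref{leb3} simply fails when $L>\beta$. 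So the reduction does not prove the theorem as stated.

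The paper does \emph{not} reduce to Theorem~\ref{th1}. It reruns the KAM scheme of Section~\ref{sec:b3} with two modifications: the quadratic remainder $\bar h(y)=O(|y|^2)$ stays in the normal form rather than being rescaled into the perturbation, and the parameter translation of \S\ref{subb3.3} is replaced by a translation of the action variable itself, $y\mapsto y+\xi_+-\xi$. The crucial frequency-preserving Lemma (Lemma~\ref{lec5}) then locates $\xi_+$ by combining homotopy invariance with excision of the Brouwer degree: on $\partial O$ one bounds $|\omega(y)-\omega(\xi_0)|\ge\sigma\delta^L$ from {\rm(A1)}, and on $O\setminus B_{s\mu^{1/L}}(\xi)$ one uses $|\omega_+(y)-\omega(y)|=O(\mu)$ coming directly from the KAM estimate on $\nabla[R]$, with no H\"older bound in a parameter ever needed. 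This is exactly what lets the argument go through for any $L>0$. If you want to salvage your approach you would have to redo this degree step inside the iteration rather than invoke Theorem~\ref{th1}; at that point you are essentially reproducing the paper's proof.
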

This theorem is proved in Section 5.1.

Next, we will give an example that satisfies  conditions $\rm(A0)$-$\rm(A1)$.  For simplicity we use the action variable $y$ as the parameter $\xi$.
\begin{theorem}\label{pro2}
Consider the Hamiltonian system {\rm(\ref{eq1})} with
\begin{equation*}
h(y)=\langle \omega,y\rangle+\frac{1}{2l+2}\vert y\vert^{2l+2},
\end{equation*}
where $y\in G\subset \mathbb{R}^n$, $l$ is a positive integer, $\omega\in \mathbb{R}^n\setminus{\{0\}}$ satisfies the Diophantine condition {\rm(\ref{eqa2})}.
Then the Hamiltonian system {\rm(\ref{eq1})} admits an invariant torus with frequency $\omega$ for any small enough perturbation.
\end{theorem}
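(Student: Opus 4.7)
The plan is to reduce Theorem~\ref{pro2} to Theorem~\ref{th2} by choosing $\xi_0 = 0$ and verifying the three hypotheses directly for the frequency map
$$\omega(y) := \nabla h(y) = \omega + |y|^{2l} y$$
on $O = G$ (assuming $G$ is an open set containing the origin, else translate so that the degenerate critical point of $h$ lies in $G^o$). Since $\omega(0) = \omega$ and $\omega$ satisfies the Diophantine condition by hypothesis, (A2) is immediate at $\xi_0 = 0$.

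For (A0), observe that $\omega(y) - \omega(0) = |y|^{2l} y$ vanishes only at $y = 0$. Pick $\delta > 0$ small enough that $\overline{B_\delta(0)} \subset G^o$, and consider the linear homotopy
$$H_t(y) = (1-t) y + t |y|^{2l} y = \bigl(1 - t + t|y|^{2l}\bigr) y, \qquad t \in [0,1].$$
On $\partial B_\delta(0)$ the scalar factor $1 - t + t \delta^{2l}$ is strictly positive, so $0 \notin H_t(\partial B_\delta(0))$. Homotopy invariance gives $\deg(|\cdot|^{2l} \cdot,\, B_\delta(0),\, 0) = \deg(\mathrm{id},\, B_\delta(0),\, 0) = 1$. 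Since $y=0$ is the unique zero of $|y|^{2l}y$ throughout $G$, excision then yields
$$\deg(\omega(\cdot),\, G^o,\, \omega) = \deg(|\cdot|^{2l}\cdot,\, G^o,\, 0) = 1 \neq 0,$$
which establishes (A0).

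For (A1), I would invoke the classical uniform monotonicity of $p$-Laplacian-type maps with $p = 2l+2 > 2$: there exists $c_l > 0$ such that
$$\bigl\langle |\xi|^{2l}\xi - |\xi_*|^{2l}\xi_*,\ \xi - \xi_*\bigr\rangle \geq c_l |\xi - \xi_*|^{2l+2}, \qquad \forall\, \xi, \xi_* \in \mathbb{R}^n.$$
This follows from the strict convexity of $\eta \mapsto \frac{1}{2l+2}|\eta|^{2l+2}$ via an elementary integration of its Hessian along the segment joining $\xi_*$ to $\xi$. Combined with Cauchy--Schwarz, it gives
$$|\omega(\xi) - \omega(\xi_*)| \geq c_l |\xi - \xi_*|^{2l+1}, \qquad \forall\, \xi, \xi_* \in \mathbb{R}^n,$$
so (A1) holds with $\sigma = c_l$ and $L = 2l+1 > 0$.

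With (A0), (A1) and (A2) all verified at $\xi_0 = 0$, Theorem~\ref{th2} immediately supplies, for every sufficiently small analytic perturbation, a point $y_\varepsilon \in G$ at which the system (\ref{eq1}) admits an invariant torus of frequency $\omega$. I expect the only nontrivial ingredient to be the $p$-Laplacian monotonicity estimate: it is precisely the mechanism that compensates for the maximal degeneracy $\nabla^2 h(0) = 0$ at the chosen base point, ensuring that the weak convexity condition (A1) remains available even though the Kolmogorov non-degeneracy fails completely at $y=0$.
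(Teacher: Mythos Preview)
Your proposal is correct and follows the same overall strategy as the paper: verify (A0), (A1), (A2) at $\xi_0=0$ and invoke Theorem~\ref{th2}. The implementations of the two non-trivial checks differ slightly. For (A0) the paper observes that $y\mapsto |y|^{2l}y$ is odd and nonvanishing on $\partial B_\delta(0)$ and appeals to Borsuk's theorem, while you homotope $|y|^{2l}y$ linearly to the identity; both give a nonzero degree, and yours has the mild bonus of identifying the degree as $+1$. For (A1) the paper writes down an explicit $\sigma$ in terms of a minimum over $B_\delta(0)$ and restricts to $|y-y_*|\geq\varrho$, which is somewhat informal; your use of the standard $p$-Laplacian monotonicity inequality $\langle |\xi|^{2l}\xi-|\xi_*|^{2l}\xi_*,\xi-\xi_*\rangle\geq c_l|\xi-\xi_*|^{2l+2}$ is cleaner and yields the global estimate $|\omega(\xi)-\omega(\xi_*)|\geq c_l|\xi-\xi_*|^{2l+1}$ directly, with no restriction on the pair $(\xi,\xi_*)$. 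Either route suffices.
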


The proof can be found in Appendix B.
\begin{proposition}\label{cor1}
If $h(y)=\langle \omega,y\rangle+\frac{1}{2l+1}\left\vert y\right\vert^{2l+1}$ in Hamiltonian system {\rm(\ref{eq1})}, $\omega\in \mathbb{R}^n\setminus{\{0\}}$ satisfies the Diophantine condition {\rm(\ref{eqa2})}, then the system may not admit torus with frequency $\omega$.
\end{proposition}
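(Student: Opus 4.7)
My plan is to exhibit an explicit small perturbation $\varepsilon P$ for which $H=h+\varepsilon P$ admits no invariant torus of frequency $\omega$, thereby showing that the analyticity hypothesis of Theorem~\ref{th2} cannot be relaxed. The fundamental obstacle is that $h(y)=\langle\omega,y\rangle+\frac{1}{2l+1}|y|^{2l+1}$ is only of class $C^{2l}$ at the critical point $y=0$ of its gradient, whereas the translation-of-parameter argument behind Theorem~\ref{th2} tacitly uses higher-order Taylor data of $h$ at the candidate $y_{\varepsilon}$.

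I would specialise to the simplest non-trivial case $n=1$, $l=1$, so that $h(y)=\omega y+\tfrac{1}{3}|y|^{3}$, with $h''(y)=2|y|$ vanishing at the origin and with genuinely sign-dependent $h'''(0^{\pm})=\pm 2$. A natural choice of test perturbation is one whose $x$-average is convex in $y$ with derivative vanishing at $y=0$, so that the averaged equation $\nabla h(y_{\varepsilon})+\varepsilon\nabla\bar{P}(y_{\varepsilon})=\omega$ forces $y_{\varepsilon}=0$; a convenient model is $P(y,x)=\alpha y^{2}+f(x)$ with $\alpha>0$ and $f$ a smooth non-constant zero-mean function on $\mathbb{T}$. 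Then the leading averaged equation reduces to $y_{\varepsilon}(|y_{\varepsilon}|+2\varepsilon\alpha)=0$, pinning $y_{\varepsilon}=0$, precisely where $h$ fails to admit a true $(2l{+}1)$-th derivative. Writing the exact fixed-point equations $\omega Y'(\theta)=-\varepsilon\partial_{x}P(Y,\theta+X)$ and $\omega X'(\theta)=h'(Y)-\omega+\varepsilon\partial_{y}P(Y,\theta+X)$ for the torus $(y,x)=(Y(\theta),\theta+X(\theta))$, the first equation forces $Y$ to oscillate with amplitude $\sim\varepsilon$ and of necessity to change sign; then evaluating the mean of the second equation, the contributions from $\{Y>0\}$ and $\{Y<0\}$ cannot cancel in the way they would for an analytic $h$, because $h'''(0^{\pm})$ have opposite signs.

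The hardest step will be converting this formal obstruction into a rigorous non-existence statement, since naive successive approximation always produces candidate tori order by order. The cleanest route I would follow is to compute the rotation number $\rho(E)$ on the level sets $\{H=E\}$ near $E=0$ directly and exhibit a one-sided limit at $E=0$ caused exactly by the jump in $h'''$ at the origin; this shows that $\omega\notin\rho(\mathbb{R})$ for the specific $P$ chosen, in sharp contrast to the polynomial case of Theorem~\ref{pro2}, where $\rho$ depends analytically on $E$ and surjects onto a full neighbourhood of~$\omega$.
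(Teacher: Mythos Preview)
Your framing misidentifies the obstruction. You read $|y|^{2l+1}$ as the non-smooth function $y\mapsto |y|^{2l+1}$ and then build the whole argument around a regularity failure of $h$ at $y=0$. But in the paper's proof (Appendix~C) one works in $\mathbb{R}^1$ and computes $h'(y)=\omega+y^{2\ell}$; that is, $h$ is the \emph{polynomial} $\omega y+\frac{1}{2\ell+1}y^{2\ell+1}$, which is real analytic. Regularity is a non-issue, and indeed Theorem~\ref{th2} is stated for analytic $h$. The point of the proposition is orthogonal to smoothness: it is that condition $\rm(A0)$ fails, since $h'(y)-h'(0)=y^{2\ell}\ge 0$ has $\deg(\,\cdot\,,B_\delta(0),0)=0$.

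Once the obstruction is correctly located, the counterexample is a one-liner and your rotation-number machinery is unnecessary. Take $\varepsilon P(y,x)=\varepsilon y$ with $\varepsilon>0$; the perturbation is independent of $x$, so the system remains integrable with frequency $h'(y)+\varepsilon=\omega+y^{2\ell}+\varepsilon$. Preserving the frequency $\omega$ would require $y^{2\ell}+\varepsilon=0$, which has no real solution. That is the paper's entire proof. Your choice $P=\alpha y^2+f(x)$ introduces genuine $x$-dependence and forces you into an incomplete oscillation/rotation-number analysis whose ``hardest step'' you yourself flag as unresolved; none of that is needed, and the claimed sign-jump in $h'''(0^\pm)$ on which your heuristic rests does not even exist for the intended analytic $h$.
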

The proof can be found in Appendix C.

Above results  imply that condition ${\rm(A0)}$ is indispensable for $n>1$ case.
Furthermore, we also prove that for $n=1$, the persistence results in Theorem \ref{pro2} hold under some weaker conditions, provided that the frequency satisfies Diophantine condition ${\rm(A2)}$.
\begin{theorem} \label{th3}
Consider Hamiltonian {\rm(\ref{eq1})} with
\begin{align*}
h(y)=\omega y+g(y),~~~~\varepsilon P(y,x,\varepsilon)=\varepsilon P(y),
\end{align*}
where $y\in G=[-1,1]\subset \mathbb{R}^1$, $\omega$ satisfies Diophantine condition {\rm(\ref{eqa2})}.
\begin{description}
\item[\rm(1)] If $g(y)\in C^{2\ell+1}$, $g'(0)=\cdots=g^{2\ell}(0)=0$, $g^{2\ell+1}(0)\neq 0$, $\ell$ is a positive integer, then the perturbed system admits at least two invariant tori with frequency $\omega$ for the small enough perturbation satisfying $\varepsilon P'(y)\, sign (g^{2\ell+1}(0))<0$; conversely, if $\varepsilon P'(y)\, sign (g^{2\ell+1}(0))>0$, the unperturbed invariant torus with frequency $\omega$ will be destroyed.
\item[\rm(2)] If $g(y)\in C^{2\ell+2}$, $g'(0)=\cdots=g^{2\ell+1}(0)=0$, $g^{2\ell+2}(0)\neq 0$, $\ell$ is a positive integer, then the perturbed system admits an invariant tori with frequency $\omega$ for any small enough perturbation.
\end{description}
\end{theorem}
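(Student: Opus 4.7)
The plan hinges on the key structural observation that since the perturbation $\varepsilon P(y,x,\varepsilon)=\varepsilon P(y)$ depends only on the action variable, Hamilton's equations reduce to
\begin{equation*}
\dot y=-\frac{\partial H}{\partial x}=0,\qquad \dot x=\frac{\partial H}{\partial y}=\omega+g'(y)+\varepsilon P'(y).
\end{equation*}
Hence every circle $\{y=y_0\}\times\mathbb{T}^1$ is automatically invariant and carries a rigid rotation of frequency $\omega+g'(y_0)+\varepsilon P'(y_0)$. The whole problem therefore collapses to the one-dimensional root-finding equation
\begin{equation*}
F_\varepsilon(y):=g'(y)+\varepsilon P'(y)=0,
\end{equation*}
so no KAM iteration is required; the Diophantine hypothesis on $\omega$ is used only to label an invariant circle as a KAM torus of frequency $\omega$.

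For part (1) I would Taylor-expand $g'(y)=c\,y^{2\ell}+o(y^{2\ell})$ with $c:=g^{2\ell+1}(0)/(2\ell)!\neq 0$. The monomial $y^{2\ell}$ is nonnegative, so $g'$ preserves the sign of $c$ on a punctured neighbourhood of $0$. When $\varepsilon P'(y)\,\mathrm{sign}(g^{2\ell+1}(0))<0$, the equation $F_\varepsilon(y)=0$ rearranges, up to higher-order terms, to $y^{2\ell}=-\varepsilon P'(y)/c$ with a small positive right-hand side; taking $2\ell$-th roots formally gives two branches $y_\varepsilon^{\pm}\approx\pm\bigl(-\varepsilon P'(0)/c\bigr)^{1/(2\ell)}$, which I would make rigorous by a contraction/implicit-function argument applied to the rescaled equation on an interval of size $O(\varepsilon^{1/(2\ell)})$. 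In the opposite sign regime, both summands of $F_\varepsilon$ carry the same nonzero sign throughout a small punctured neighbourhood of $0$, and also at $y=0$ since $\varepsilon P'(0)$ inherits that strict sign, so $F_\varepsilon$ cannot vanish and the unperturbed torus at $y=0$ is destroyed.

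For part (2) the expansion becomes $g'(y)=c\,y^{2\ell+1}+o(y^{2\ell+1})$ with $c:=g^{2\ell+2}(0)/(2\ell+1)!\neq 0$, so $g'$ changes sign across $0$. Fixing $\delta>0$ small enough that the leading term dominates both its Taylor remainder and $\varepsilon P'$ uniformly on $[-\delta,\delta]$, the values $F_\varepsilon(\pm\delta)$ carry opposite signs and the intermediate value theorem produces a zero $y_\varepsilon\in(-\delta,\delta)$ of $F_\varepsilon$, which yields the desired invariant torus of frequency $\omega$.

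The proof is almost entirely elementary because $y$ is an exact integral of motion, so the main obstacle is not analytical depth but careful book-keeping of signs and Taylor remainders. The most delicate step is the existence half of case (1): one must control the remainder of $g'$ on an $\varepsilon$-dependent neighbourhood of $0$, which I would handle by restricting $|y|\le K\varepsilon^{1/(2\ell)}$ for a suitable constant $K$ and exploiting the assumed $C^{2\ell+1}$ regularity of $g$ to bound the remainder strictly below $|c|\,y^{2\ell}$ on that scale.
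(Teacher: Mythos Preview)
Your proposal is correct and follows essentially the same route as the paper: reduce to $\dot y=0$, $\dot x=\omega+g'(y)+\varepsilon P'(y)$, and solve the scalar equation $g'(y)+\varepsilon P'(y)=0$ via the Taylor expansion of $g'$ about $0$, distinguishing the even and odd leading-order cases. The paper's argument is in fact sketchier than yours (it simply asserts solvability from the expansion without spelling out a contraction or intermediate-value step); the only extra ingredient in the paper is an aside in part~(2) noting that $\deg(h'(\cdot)-h'(0),B_\delta(0),0)\neq 0$ so that the abstract result also applies, before giving the same direct computation you describe.
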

\begin{remark}
We don't know whether the results in Theorem \ref{th3} can be extended to higher dimensions or not.
\end{remark}
\section{KAM step}\label{sec:b3}
\indent In this section, we will describe the quasi-linear iterative scheme, show the detailed construction and estimates for one cycle of KAM steps, which is essential to study the KAM theory, see \cite{chow,han,li,li1,poschel}. It should be pointed out that in our KAM iteration, we present a new way to move parameters; while in the usual KAM iteration, one has to dig out a decreasing series of parameter domains, see \cite{chow,han,li1,poschel1,poschel,qian2,qian1}.
\subsection{Description of the 0-th KAM step.}
\indent Given an integer $m>L+1$, where $L$ was defined as in $\rm(A1)$. Denote $\rho=\frac{1}{2(m+1)}$, and let $\eta>0$ be an integer such that $(1+\rho)^\eta>2$. We define
\begin{equation}\label{gamma}\gamma=\varepsilon^{\frac{1}{4(n+m+2)}}.\end{equation}
Consider the perturbed Hamiltonian (\ref{eqb1}). We first define the following $0$-th KAM step parameters:
\begin{align}\label{eqb2}
&r_0=r,~~~~~\gamma_0=\gamma,~~~~~e_0=0,
~~~~~\bar{h}_0=0,~~~~~\mu_0=\varepsilon^{\frac{1}{8\eta(\tau+1)(m+1)}},\\
&s_0=\frac{s\gamma_0}{16(M^*+2)K_1^{\tau+1}},~O_0=\{\xi\in O\vert~\vert\xi-\xi_0\vert<dist(\xi_0,\partial O)\},\notag\\
&D(s_0,r_0):=\{(y,x):dist(y,G)<s_0,\vert\textrm{Im}x\vert<r_0\},\notag
\end{align}
where $0<s_0,\gamma_0,\mu_0\leq 1$, $\tau>n-1$, $M^*>0$ is a constant defined as in Lemma \ref{le2}, and
\begin{align*}
K_1=([\log\frac{1}{\mu_0}]+1)^{3\eta}.
\end{align*}

Therefore, we can write
\begin{align*}
H_0&=: H(y,x,\xi_0)=N_0+P_0,\\
N_0&=: N_0(y,\xi_0,\varepsilon)=e_0+\langle\omega(\xi_0),y\rangle+\bar{h}_0,\\
P_0&=:\varepsilon P(y,x,\xi_0,\varepsilon).
\end{align*}

We first prove an important estimate.
\begin{lemma}
\begin{equation}\label{P0}
\vert\|P_0\|\vert_{D(s_0,r_0)}\leq\gamma_0^{n+m+2}s_0^m\mu_0.
\end{equation}
\end{lemma}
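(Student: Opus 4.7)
The plan is to prove (\ref{P0}) by reducing it to a comparison of powers of $\varepsilon$ after substituting the 0-th step parameters from (\ref{eqb2}) and the choice (\ref{gamma}) of $\gamma$.

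First I would observe that since $P(y,x,\xi,\varepsilon)$ is real analytic in $(y,x)$ on some fixed complex neighbourhood $D(s,r)$ of $G\times\mathbb{T}^n$ and H\"{o}lder continuous in $\xi$ on the compact set $O$, the quantity $\vert\|P\|\vert_{D(s,r)}$ is bounded by a fixed constant $M_0>0$ independent of $\varepsilon$ (and of the choice of $\xi_0$). Since $s_0\leq s$ and $r_0=r$, monotonicity of the norm in the domain gives $\vert\|P_0\|\vert_{D(s_0,r_0)}\leq \varepsilon M_0$. It therefore suffices to verify that $\varepsilon M_0\leq \gamma_0^{n+m+2}s_0^m\mu_0$ once $\varepsilon$ is sufficiently small.

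Next I would substitute. From (\ref{gamma}), $\gamma_0^{n+m+2}=\varepsilon^{1/4}$; and from (\ref{eqb2}),
$$s_0^m = \Bigl(\tfrac{s}{16(M^*+2)}\Bigr)^m \frac{\varepsilon^{m/(4(n+m+2))}}{K_1^{m(\tau+1)}},\qquad \mu_0=\varepsilon^{1/(8\eta(\tau+1)(m+1))}.$$
Collecting the $\varepsilon$-powers,
$$\gamma_0^{n+m+2}s_0^m\mu_0 \;=\; \frac{C_0}{K_1^{m(\tau+1)}}\,\varepsilon^{\alpha},\qquad \alpha:=\tfrac{1}{4}+\tfrac{m}{4(n+m+2)}+\tfrac{1}{8\eta(\tau+1)(m+1)},$$
where $C_0:=\bigl(s/(16(M^*+2))\bigr)^m$ is a fixed positive constant. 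The decisive algebraic step is the check $\alpha<1$: rewrite
$$\tfrac{1}{4}+\tfrac{m}{4(n+m+2)}=\tfrac{n+2m+2}{4(n+m+2)}\leq \tfrac{1}{2},$$
(equivalent to $n+2\geq 0$), hence $\alpha\leq \tfrac{1}{2}+\tfrac{1}{8\eta(\tau+1)(m+1)}<1$.

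Finally, I would invoke the fact that $K_1=([\log(1/\mu_0)]+1)^{3\eta}$ is merely polylogarithmic in $\varepsilon^{-1}$ (since $\mu_0$ is a positive power of $\varepsilon$), and therefore $K_1^{m(\tau+1)}$ is dominated by $\varepsilon^{-\delta}$ for every fixed $\delta>0$. Combined with the positive slack $1-\alpha>0$, this yields $\varepsilon M_0\leq C_0\,\varepsilon^{\alpha}/K_1^{m(\tau+1)}$ for all sufficiently small $\varepsilon$, which is exactly (\ref{P0}). I do not anticipate any genuine obstacle here: the whole step is a bookkeeping verification, whose only delicate point is confirming $\alpha<1$ so that the linear factor $\varepsilon$ on the left is absorbed, with the polylog factor $K_1^{m(\tau+1)}$ swallowed by the strictly positive gap $1-\alpha$.
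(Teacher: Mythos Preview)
Your proposal is correct and follows essentially the same approach as the paper: both reduce the inequality to a comparison of $\varepsilon$-powers after substituting the definitions of $\gamma_0,s_0,\mu_0$, and both exploit the gap $1-\alpha>0$ to absorb the $K_1$ factor. The only cosmetic difference is that the paper bounds $K_1^{m(\tau+1)}$ explicitly via $[\log(1/\mu_0)]+1<1/\mu_0$ (yielding the concrete exponent $7/8$ and the smallness condition~(\ref{vare0})), whereas you invoke the polylogarithmic growth of $K_1$ qualitatively; either way the conclusion is immediate.
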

\begin{proof}
Using the fact $\gamma_0^{n+m+2}=\varepsilon^{\frac{1}{4}}$ and $[\log\frac{1}{\mu_0}]+1<\frac{1}{\mu_0}$, we have
\begin{align*}
s_0^m=\frac{s^m\varepsilon^{\frac{m}{4(n+m+2)}}}{16^m(M^*+2)^mK_1^{m(\tau+1)}}
>\frac{s^m\varepsilon^{\frac{m}{4(n+m+2)}}\mu_0^{3\eta m(\tau+1)}}{16^m(M^*+2)^m}\geq\frac{s^m\varepsilon^{\frac{m}{4(n+m+2)}+\frac{3}{8}}}{16^m(M^*+2)^m}.
\end{align*}
Moreover, let $\varepsilon_0>0$ be small enough so that
\begin{equation}\label{vare0}
\varepsilon_0^{\frac{1}{8}-\frac{1}{8\eta(\tau+1)(m+1)}}\vert\|P\|\vert_{D(s_0,r_0)}\frac{16^m(M^*+2)^m}{s^m}\leq 1,
\end{equation}
using the fact that $\mu_0=\varepsilon^{\frac{1}{8\eta(\tau+1)(m+1)}}$,  we get
\begin{align}\label{u0}
\gamma_0^{n+m+2}s_0^m\mu_0&\geq\frac{s^m\varepsilon^{\frac{m}{4(n+m+2)}+\frac{3}{8}+\frac{1}{4}+\frac{1}{8\eta(\tau+1)(m+1)}}}{16^m(M^*+2)^m}
\geq\frac{s^m\varepsilon^{\frac{1}{4}+\frac{3}{8}+\frac{1}{4}+\frac{1}{8\eta(\tau+1)(m+1)}}}{16^m(M^*+2)^m}
\notag\\&=\varepsilon^{\frac{7}{8}}\frac{s^m\varepsilon^{\frac{1}{8\eta(\tau+1)(m+1)}}}{16^m(M^*+2)^m},
\end{align}
and by (\ref{vare0}) and $0<\varepsilon<\varepsilon_0$,
\begin{align*}
\varepsilon^{\frac{1}{8}-\frac{1}{8\eta(\tau+1)(m+1)}}\vert\|P\|\vert_{D(s_0,r_0)}\frac{16^m(M^*+2)^m}{s^m}\leq 1,
\end{align*}
i.e.,
\begin{align}\label{P}
\varepsilon^{\frac{1}{8}}\vert\|P\|\vert_{D(s_0,r_0)}\leq \frac{s^m\varepsilon^{\frac{1}{8\eta(\tau+1)(m+1)}}}{16^m(M^*+2)^m}.
\end{align}
Then by (\ref{u0}) and (\ref{P}),
\begin{align*}
\vert\|P_0\|\vert_{D(s_0,r_0)}=\varepsilon^{\frac{7}{8}} \varepsilon^{\frac{1}{8}}\vert\|P\|\vert_{D(s_0,r_0)}\leq \varepsilon^{\frac{7}{8}}\frac{s^m\varepsilon^{\frac{1}{8\eta(\tau+1)(m+1)}}}{16^m(M^*+2)^m}\leq\gamma_0^{n+m+2}s_0^m\mu_0,
\end{align*}
which implies (\ref{P0}).

The proof is complete.
\end{proof}

\subsection{Induction from  $\nu$-th KAM step}

\subsubsection{Description of the $\nu$-th KAM step}
We now define the $\nu$-th KAM step parameters:
$$r_\nu=\frac{r_{\nu-1}}{2}+\frac{r_0}{4},~~~s_\nu=\frac{1}{8}\mu_{\nu-1}^{2\rho}s_{\nu-1},~~~\mu_\nu=8^m\mu_{\nu-1}^{1+\rho},$$
where $\rho=\frac{1}{2(m+1)}$.

Now, suppose that at $\nu$-th step, we have arrived at the following real analytic Hamiltonian:
\begin{equation}\label{eqb3}
\begin{aligned}
H_\nu&=N_\nu+P_\nu,~~~~~~~~~~~~~~~~~~~~~~~~~~~~~~~~~~~~\\
N_\nu&=e_\nu+\langle\omega(\xi_0),y\rangle+\bar{h}_\nu(y,\xi),
\end{aligned}
\end{equation}
defined on $D(s_\nu,r_\nu)$
and
\begin{equation}\label{equ11}
\left\vert\left\|P_\nu\right\|\right\vert_{D(s_\nu,r_\nu)}\leq\gamma_0^{n+m+2}s_\nu^m\mu_\nu.
\end{equation}
The equation of motion associated to $H_{\nu}$ is
\begin{equation}\label{eqb4}
\left\{
\begin{array}{ll}
\dot{y}_\nu=-\partial_{x_\nu} H_\nu,\\
\dot{x}_\nu=~~\partial_{y_\nu} H_\nu.
\end{array}
\right.
\end{equation}

Except for additional instructions,{ we will omit the index for all quantities of the present KAM step (at $\nu$-th step) and use $+$ to index all quantities (Hamiltonian, domains, normal form, perturbation, transformation, etc.) in the next KAM step (at $(\nu+1)$-th step)}.
To simplify the notations, we will not specify the dependence of $P$, $P_+$ etc. All the constants $c_1$-$c_6$ below are positive and independent of the iteration process, and we will also use $c$ to denote any intermediate positive constant which is independent of the iteration process.

Define
\begin{align*}
r_+&=\frac{r}{2}+\frac{r_0}{4},\\
s_+&=\frac{1}{8}\alpha s,~~~~~\alpha=\mu^{2\rho}=\mu^{\frac{1}{m+1}},\\
\mu_+&=8^mc_0\mu^{1+\rho},~~~~~c_0=\max\{1,c_1,c_2,\cdots,c_6\},\\
K_+&=([\log\frac{1}{\mu}]+1)^{3\eta},\\
\hat{D}&=D(s,r_++\frac{7}{8}(r-r_+)),\\
\tilde{D}&=D(\frac{1}{2} s,r_++\frac{6}{8}(r-r_+)),\\
D(s)&=\{y\in C^n:\vert y\vert<s\},\\
D_{\frac{i}{8}\alpha}&=D(\frac{i}{8}\alpha s,r_++\frac{i-1}{8}(r-r_+)), ~~i=1,2,\cdots,8,\\
D_+&=D_{\frac{1}{8}\alpha}=D(s_+,r_+),\\
O_+&=\{\xi:dist(\xi,O)\leq \mu^\frac{1}{L}\},\\
\Gamma(r-r_+)&=\sum_{0<\vert k\vert\leq K_+}\left\vert k\right\vert^{3\tau+5}e^{-\vert k\vert\frac{r-r_+}{8}}.
\end{align*}

\subsubsection{Construct a symplectic transformation}
We will construct a symplectic coordinate transformation $\Phi_{+}$:
\begin{align}\label{Phi+}
\Phi_{+}:(y_{+},x_{+})\in D(s_{+},r_{+})\rightarrow \Phi_{+}(y_{+},x_{+})=(y,x)\in D(s,r)
\end{align}
 such that it transforms the Hamiltonian ($\ref{eqb3}$) into the Hamiltonian of the next KAM cycle (at $(\nu+1)$-th step), i.e.,
\begin{equation}\label{H+}
H_{+}=H\circ\Phi_{+}=N_{+}+ P_{+},
\end{equation}
where $N_{+}$ and $P_{+}$ have similar properties as $N$ and $P$ respectively on $D(s_{+},r_{+})$, and the equation of motion (\ref{eqb4}) is changed into
\begin{equation}\label{yx+}
\left\{
\begin{array}{ll}
\dot{y}_{+}=-\partial_{x_{+}} H_{+},\\
\dot{x}_{+}=~~\partial_{y_{+}} H_{+}.
\end{array}
\right.
\end{equation}
In the following, we prove (\ref{yx+}). Let $\Phi_+(y_+,x_+):=\left(\Phi_+^1(y_+,x_+),\Phi_+^2(y_+,x_+)\right)$, by (\ref{Phi+}), we have
\begin{align*}
\left(\begin{array}{c}
\dot{y}\\
\dot{x}
\end{array}\right)=\left(\begin{array}{cc}
(\partial_{y_+} \Phi_{+}^1)\dot{y}_{+}&(\partial_{x_+} \Phi_{+}^1)\dot{x}_{+}\\
(\partial_{y_+} \Phi_{+}^2)\dot{y}_{+}&(\partial_{x_+} \Phi_{+}^2)\dot{x}_{+}
\end{array}\right)=D\Phi_{+}\left(\begin{array}{c}
\dot{y}_{+}\\
\dot{x}_{+}
\end{array}\right),
\end{align*}
 by (\ref{Phi+}) and (\ref{H+}), we get
\begin{align*}
\left(\begin{array}{c}
\partial_{y_{+}} H\\
\partial_{x_{+}} H
\end{array}\right)&=\left(\begin{array}{cc}
\partial_{y} H\partial_{y_+} y&\partial_{x} H\partial_{y_+} x\\
\partial_{y} H\partial_{x_+} y&\partial_{x} H\partial_{x_+} x
\end{array}\right)
=\left(\begin{array}{cc}
\partial_{y_+} \Phi_+^1&\partial_{y_+} \Phi_+^2\\
\partial_{x_+} \Phi_+^1&\partial_{x_+} \Phi_+^2
\end{array}\right)
\left(\begin{array}{cc}
\partial_y H\\
\partial_x H
\end{array}\right)\\
&=D\Phi_{+}^\top\left(\begin{array}{cc}
\partial_y H\\
\partial_x H
\end{array}\right).
\end{align*}
Then this together with (\ref{eqb4}) yields
\begin{align*}
\left(\begin{array}{c}
\dot{y}_{+}\\
\dot{x}_{+}
\end{array}\right)&=D\Phi_{+}^{-1}\left(\begin{array}{c}
\dot{y}\\
\dot{x}
\end{array}\right)
=D\Phi_{+}^{-1}J\left(\begin{array}{cc}
\partial_y H\\
\partial_x H
\end{array}\right)
=D\Phi_{+}^{-1}J(D\Phi_{+}^{-1})^\top\left(\begin{array}{c}
\partial_{y_{+}} H\\
\partial_{x_{+}} H
\end{array}\right)\\
&
=J\left(\begin{array}{c}
\partial_{y_{+}} H\\
\partial_{x_{+}} H
\end{array}\right),\\
\end{align*}
where $J$ is the standard symplectic matrix, i.e.,
\begin{align*}
J=\left(\begin{array}{cc}
0&-I_d\\
I_d&0
\end{array}\right).
\end{align*}
This finishes the proof of   (\ref{yx+}).

Next, we show the detailed construction of $\Phi_+$ and the estimates of $P_+$.
\subsubsection{Truncation}\label{sub1}
Consider the Taylor-Fourier series of $P$:
\begin{equation*}
P=\sum_{k\in Z^n,~\imath\in Z_+^n}p_{k\imath}y^{\imath}e^{\sqrt{-1}\langle k,x\rangle},
\end{equation*}
and let $R$ be the truncation of $P$ of the form
\begin{equation*}
R=\sum_{\vert k\vert\leq K_+,~\vert\imath\vert\leq m}p_{k\imath}y^{\imath}e^{\sqrt{-1}\langle k,x\rangle}.
\end{equation*}

Next, we will prove that the norm of $P-R$ is much smaller than the norm of $P$ by selecting truncation appropriately, see the below lemma.
\begin{lemma}\label{leb1}
Assume that
\begin{align*}
\textbf{\textsc{(H1)}}: \int_{K_+}^{\infty}t^{n}e^{-t\frac{r-r_+}{16}}dt\leq\mu.
\end{align*}
Then there is a constant $c_1$ such that % for all $i\in Z_{+}^{n}, |l|\leq n$,
\begin{align}\label{equ9}
\left\vert\left\|P-R\right\|\right\vert_{D_\alpha}&\leq c_1\gamma_0^{n+m+2}s^m\mu^2,\\\label{equ10}
\left\vert\left\|R\right\|\right\vert_{D_\alpha}&\leq c_1\gamma_0^{n+m+2}s^m\mu.
\end{align}
\end{lemma}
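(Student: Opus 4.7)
The plan is to split $P-R$ into a Taylor tail (all monomials with $|\imath|>m$) and a Fourier tail (the harmonics with $|k|>K_+$, $|\imath|\leq m$), estimate each by the Cauchy inequalities for the Fourier--Taylor coefficients of $P$, and then invoke hypothesis \textbf{(H1)} for the Fourier tail and the radius shrinkage $s\to\alpha s$ with $\alpha=\mu^{2\rho}$ for the Taylor tail. Because $\vert\|\cdot\|\vert$ is the sum of a sup-norm in $(y,x,\xi)$ and a H\"{o}lder-in-$\xi$ seminorm, and the Fourier--Taylor expansion is linear in the coefficients $p_{k\imath}(\xi)$, the same argument applies verbatim to both $|p_{k\imath}(\xi)|$ and $|p_{k\imath}(\xi)-p_{k\imath}(\zeta)|/|\xi-\zeta|^\beta$; I will therefore work with the composite $\vert\|p_{k\imath}\|\vert$ directly.

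First I would invoke the standard Cauchy estimates on the complex polydisk $D(s,r)$ to obtain the coefficient bound
$$\vert\|p_{k\imath}\|\vert\;\leq\;\frac{\vert\|P\|\vert_{D(s,r)}}{s^{|\imath|}}\,e^{-|k|r},$$
valid, by the above remark, for the full norm $\vert\|\cdot\|\vert$. Since on $D_\alpha=D(\alpha s,\,r_++\tfrac{7}{8}(r-r_+))$ one has $|y|<\alpha s$ and $|\textrm{Im}\,x|<r-\tfrac18(r-r_+)$, this gives
\begin{align*}
\vert\|P-R\|\vert_{D_\alpha}&\leq\sum_{\substack{|k|\leq K_+\\ |\imath|>m}}\vert\|p_{k\imath}\|\vert\,(\alpha s)^{|\imath|}\,e^{|k|(r-\frac{r-r_+}{8})}\\
&\quad+\sum_{|k|>K_+,\,\imath}\vert\|p_{k\imath}\|\vert\,(\alpha s)^{|\imath|}\,e^{|k|(r-\frac{r-r_+}{8})}.
\end{align*}
In the Taylor tail the $|k|$-sum is uniformly bounded and the $|\imath|$-sum, a geometric series starting at $|\imath|=m+1$ with ratio $\alpha<1$, produces a factor $\alpha^{m+1}=\mu^{2\rho(m+1)}=\mu$; combined with the inductive estimate $\vert\|P\|\vert_{D(s,r)}\leq\gamma_0^{n+m+2}s^m\mu$ from (\ref{equ11}), this yields the expected $\mu^2$. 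In the Fourier tail the coefficient bound leaves the majorant $C\,\vert\|P\|\vert_{D(s,r)}\sum_{|k|>K_+}e^{-|k|(r-r_+)/8}$; bounding the lattice sum by a comparison with $\int_{K_+}^\infty t^n e^{-t(r-r_+)/16}\,dt$, where half of the exponent absorbs the counting factor and the other half triggers hypothesis \textbf{(H1)}, one gains the remaining factor $\mu$. Summing the two contributions produces (\ref{equ9}).

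For the bound on $R$, I would estimate directly: since $R$ is a partial sum of the Fourier--Taylor series of $P$,
$$\vert\|R\|\vert_{D_\alpha}\;\leq\;\sum_{\substack{|k|\leq K_+\\ |\imath|\leq m}}\vert\|p_{k\imath}\|\vert\,(\alpha s)^{|\imath|}\,e^{|k|(r-\frac{r-r_+}{8})}\;\leq\; C\,\vert\|P\|\vert_{D(s,r)},$$
the constant $C$ being the value of a finite sum in $k$ and a convergent geometric sum in $\imath$, both uniformly bounded as $\mu\to 0$. Inserting $\vert\|P\|\vert_{D(s,r)}\leq\gamma_0^{n+m+2}s^m\mu$ and enlarging $c_1$ if needed gives (\ref{equ10}).

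The only non-routine point of the proof, and hence what I would consider the main obstacle, will be bookkeeping the split of the margin $\tfrac18(r-r_+)$ between the exponential weight required for the integral comparison and the residual decay matched to \textbf{(H1)}, together with verifying that the two-part structure of $\vert\|\cdot\|\vert$ really is preserved under the Cauchy inequalities used above; both are essentially mechanical once the right decomposition is in place.
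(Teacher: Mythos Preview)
Your coefficient-wise approach contains a genuine gap: the claim that ``the $|k|$-sum is uniformly bounded'' is not correct in the sense required. After inserting the Cauchy bound $\vert\|p_{k\imath}\|\vert\leq \vert\|P\|\vert_{D(s,r)}\,s^{-|\imath|}e^{-|k|r}$ and evaluating on $D_\alpha$ (angular width $r_+ + \tfrac78(r-r_+)$), every term carries only the residual decay $e^{-|k|(r-r_+)/8}$. The resulting sum
\[
\sum_{|k|\leq K_+} e^{-|k|(r-r_+)/8}\;\leq\;\sum_{k\in\mathbb Z^n} e^{-|k|(r-r_+)/8}\;\sim\;c_n\,(r-r_+)^{-n}
\]
is \emph{not} bounded independently of the iteration step: since $r_\nu-r_{\nu+1}=r_0/2^{\nu+2}$, it grows like $2^{\nu n}$. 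But the constant $c_1$ in the lemma must be step-independent --- it enters $c_0=\max\{1,c_1,\dots,c_6\}$ and hence the recursion $\mu_+=8^mc_0\mu^{1+\rho}$. The same defect reappears in your direct estimate of $R$, where the ``finite sum in $k$'' is this very sum.

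The paper circumvents this by estimating the Taylor tail $II=\sum_{|k|\leq K_+,\,|\imath|>m}p_{k\imath}y^\imath e^{\sqrt{-1}\langle k,x\rangle}$ \emph{not} termwise but as the Taylor remainder of the single analytic function $P-I$: one takes a derivative of order $|p|=m+1$ in $y$, applies the Cauchy estimate $|\partial_y^{p}(P-I)|\leq c\,s^{-(m+1)}|P-I|_{\hat D}$ on the intermediate domain $\hat D=D(s,\,r_++\tfrac78(r-r_+))$, and then integrates back over the $y$-ball of radius $\alpha s$, producing the factor $(\alpha s)^{m+1}/s^{m+1}=\alpha^{m+1}=\mu$ without ever re-summing in $k$. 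Since $|P-I|_{\hat D}\leq |P|_{D(s,r)}+|I|_{\hat D}\leq 2\gamma_0^{n+m+2}s^m\mu$ already, the bound is clean of any $(r-r_+)^{-n}$. For $R$, the paper then just uses the triangle inequality $\vert\|R\|\vert_{D_\alpha}\leq \vert\|P-R\|\vert_{D_\alpha}+\vert\|P\|\vert_{D(s,r)}$, again avoiding a direct $k$-sum on the shrunk strip. Your treatment of the Fourier tail via \textbf{\textsc{(H1)}} and your observation that the Cauchy inequalities act linearly and hence respect the H\"older seminorm are both correct; what is missing is precisely this differentiate--and--integrate device for the Taylor tail.
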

\begin{proof}
Denote
\begin{align*}
I&=\sum_{\vert k\vert>K_+,~\imath\in Z_+^n}p_{k\imath}y^{\imath} e^{\sqrt{-1}\langle k,x\rangle},\\
II&=\sum_{\vert k\vert\leq K_+,~\vert\imath\vert> m}p_{k\imath}y^{\imath} e^{\sqrt{-1}\langle k,x\rangle}.
\end{align*}
Then
\begin{align*}
P-R=I+II.
\end{align*}
To estimate $I$, we note by (\ref{equ11}) that
\begin{align}\label{pki}
\left\vert\sum_{\imath\in Z_+^n}p_{k\imath}y^{\imath}\right\vert\leq \left\vert P\right\vert_{D(s,r)}e^{-\vert k\vert r}\leq\gamma_0^{n+m+2}s^{m}\mu e^{-\vert k\vert r},
\end{align}
where the first inequality has been frequently used in \cite{chow,cong,han,li,poschel,qian2,qian1,salamon} and the detailed proof see \cite{salamon}.
This together with $\textbf{\textsc{(H1)}}$ yields
\begin{align}
\left\vert I\right\vert_{\hat D}&\leq\sum_{\vert k\vert>K_+}\left\vert\sum_{\imath\in Z_+^n}p_{k\imath}y^{\imath}\right\vert e^{\vert k\vert(\frac{r_+}{8}+\frac{7r}{8})}
\leq\sum_{\vert k\vert>K_+}\left\vert P\right\vert _{D(s,r)}e^{-\vert k\vert\frac{r-r_+}{8}}\notag\\\label{equ7}
&\leq\gamma_0^{n+m+2}s^{m}\mu\sum_{\kappa=K_+}^{\infty}\kappa^{n}e^{-\kappa\frac{r-r_+}{8}}
\leq\gamma_0^{n+m+2}s^{m}\mu\int_{K_+}^{\infty}t^{n}e^{-t\frac{r-r_+}{16}}dt\\
&\leq\gamma_0^{n+m+2}s^{m}\mu^2\notag.
\end{align}
It follows from (\ref{equ11}) and (\ref{equ7}) that
\begin{align*}
\left\vert P-I\right\vert_{\hat D}\leq\vert P\vert_{D(s,r)}+\vert I\vert_{\hat D}\leq2\gamma_0^{n+m+2}s^{m}\mu.
\end{align*}
For $\vert p\vert=m+1$, let $\int$ be the obvious antiderivative of $\frac{\partial^{p}}{\partial y^p}$. Then the Cauchy estimate of $P-I$ on $D_\alpha$ yields
\begin{align*}
\left\vert II\right\vert_{D_\alpha}&=\left\vert\int\frac{\partial^{p}}{\partial y^p}\sum_{\vert k\vert\leq K_+,~\vert\imath\vert>m}p_{k\imath}y^{\imath} e^{\sqrt{-1}\langle k,x\rangle}dy\right\vert_{D_{\alpha}}\\
&\leq\left\vert\int\left\vert\frac{\partial^{p}}{\partial y^p}(P-I)\right\vert dy\right\vert_{D_{\alpha}}\\
&\leq\left\vert\frac{c}{s^{m+1}}\int\left\vert P-I\right\vert_{\hat{D}}dy\right\vert_{D_{\alpha}}\\
&\leq2\frac{c}{s^{m+1}}\gamma_0^{n+m+2}s^{m}\mu(\alpha s)^{m+1}\\
&\leq c\gamma_0^{n+m+2}s^{m}\mu^2.
\end{align*}
Thus,
\begin{align}\label{equ12}
\left\vert P-R\right\vert_{D_\alpha}=\left\vert I+II\right\vert_{D_\alpha}\leq c\gamma_0^{n+m+2}s^{m}\mu^2,
\end{align}
and therefore,
\begin{align}\label{equ13}
\left\vert R\right\vert_{D_\alpha}\leq\vert P-R\vert_{D_\alpha}+\vert P\vert_{D(s,r)}\leq c\gamma_0^{n+m+2}s^{m}\mu.
\end{align}

Next, we estimate $\left\|P-R\right\|_{C^\beta}$. In view of the definition of $\|\cdot\|_{C^\beta}$, for $\forall y,x\in D_\alpha$, we have
\begin{align}
\left\|P-R\right\|_{C^\beta}&=\sup_{\xi\neq\zeta}\frac{\left\vert P(x,y,\xi)-R(x,y,\xi)-(P(x,y,\zeta)-R(x,y,\zeta))\right\vert}{\left\vert\xi-\zeta\right\vert^\beta}\notag\\
&\leq\sup_{\xi\neq\zeta}\frac{\left\vert\int\right\vert\frac{\partial^p}{\partial y^p}(P(x,y,\xi)-R(x,y,\xi)-(P(x,y,\zeta)-R(x,y,\zeta)))\left\vert dy\right\vert}{\vert\xi-\zeta\vert^\beta}\notag\\\label{equ14}
&\leq\sup_{\xi\neq\zeta}\frac{\left\vert\frac{c}{s^{m+2}}\int\left\vert P(x,y,\xi)-P(x,y,\zeta)\right\vert dy\right\vert}{\vert\xi-\zeta\vert^\beta}\notag\\
&\leq\sup_{\xi\neq\zeta}\frac{c}{s^{m+1}}\frac{\vert P(x,y,\xi)-P(x,y,\zeta)\vert}{\vert\xi-\zeta\vert^\beta}(\alpha s)^{m+1}\notag\\
&\leq c\mu\|P\|_{C^\beta}\leq c\gamma_0^{n+m+2}s^m\mu^2,
\end{align}
where the third inequality follows from Cauchy estimate and the last inequality follows from (\ref{equ11}).

Similarly, we get
\begin{align}\label{equ15}
\|R\|_{C^\beta}<\|P-R\|_{C^\beta}+\|P\|_{C^\beta}\leq c\gamma_0^{n+m+2}s^m\mu.
\end{align}

It follows from (\ref{equ12}), (\ref{equ13}), (\ref{equ14}) and  (\ref{equ15}) that (\ref{equ9}) and (\ref{equ10}) hold.

The proof is complete.
\end{proof}
\subsubsection{Homological Equation}\label{sub3.2}
As usual, we shall construct a symplectic transformation as the time 1-map $\phi_{F}^1$ of the flow generated by a Hamiltonian $F$ to eliminate all resonant terms in $R$, i.e., all terms
\begin{align*}
p_{k\imath}y^{\imath}e^{\sqrt{-1}\langle k,x\rangle},~~~~0<\vert k\vert\leq K_+,\vert\imath\vert\leq m.
\end{align*}

To do so, we first construct a Hamiltonian $F$ of the form
\begin{equation}\label{eqb5}
F=\sum_{0<\vert k\vert\leq K_+,\vert\imath\vert\leq m}f_{k\imath}y^{\imath}e^{\sqrt{-1}\langle k,x\rangle},
\end{equation}
satisfying the equation
\begin{equation}\label{eqb6}
\{N,F\}+R-[R]=0,
\end{equation}
where $[R]=\frac{1}{(2\pi)^n}\int_{T^n}R(y,x)dx$ is the average of the truncation $R$.

Substituting (\ref{eqb5}) into (\ref{eqb6}) yields that
\begin{align*}
&-\sum_{0<\vert k\vert<K_+,\vert\imath\vert\leq m}\sqrt{-1}\left\langle k,\omega(\xi_0)+\partial_y\bar{h}\right\rangle f_{k\imath}y^\imath e^{\sqrt{-1}\langle k,x\rangle}\\
&+\sum_{0<\vert k\vert<K_+,\vert\imath\vert\leq m} p_{k\imath}y^\imath e^{\sqrt{-1}\langle k,x\rangle}=0.
\end{align*}
By comparing the coefficients above, we then obtain the following quasi-linear equations:
\begin{equation}\label{eqb7}
\sqrt{-1}\left\langle k,\omega(\xi_0)+\partial_y\bar{h}\right\rangle f_{k\imath}=p_{k\imath},~~~\vert\imath\vert\leq m,~~~0<\vert k\vert\leq K_+.
\end{equation}
We declare that the quasi-linear equations (\ref{eqb7}) is solvable under some suitable conditions.
The details can be seen in the following lemma:
\begin{lemma}\label{le2}
Assume that
\begin{align*}
&\textbf{\textsc{(H2)}}: max_{\vert i\vert\leq 2}\left\vert\left\| \partial_y^i\bar{h}- \partial_y^i\bar{h}_0\right\|\right\vert_{D(s)}\leq \mu_0^{\frac{1}{2}}, \\
&\textbf{\textsc{(H3)}}:2s<\frac{\gamma_0}{(M^*+2)K_+^{\tau+1}},
\end{align*}
where
\begin{align*}
M^*=\max_{\vert i\vert\leq 2, y\in D(s)}\left\vert \partial_{y}^i\bar h_0(\xi_0,y)\right\vert.
\end{align*}
Then the quasi-linear equations (\ref{eqb7}) can be uniquely solved on $D(s)$ to obtain a family of functions $f_{k\imath}$ which are analytic in $y$, and satisfy the following properties:
\begin{equation}\label{eqb8}
\left\vert\left\|\partial_y^if_{k\imath}\right\|\right\vert_{D(s)}\leq c_2|k|^{(\vert i\vert+1)\tau+\vert i\vert}\gamma_0^{n+m+1-\vert i\vert}s^{m-\vert\imath\vert}\mu e^{-\vert k\vert r},
\end{equation}
for all $\vert\imath\vert\leq m, 0<\vert k\vert\leq K_+, \vert i\vert\leq 2$, where $c_2$ is a constant.
\end{lemma}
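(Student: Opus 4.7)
The plan is to exploit the fact that the Taylor--Fourier coefficients $p_{k\imath}$ do not depend on $y$, so equation $(3.7)$ is, for each fixed $k$ and $\imath$, a pointwise algebraic identity with the unique candidate solution
$$f_{k\imath}(y)\;=\;-\sqrt{-1}\,\frac{p_{k\imath}}{\langle k,\,\omega(\xi_0)+\partial_y\bar h(y)\rangle}.$$
Existence, uniqueness and analyticity on $D(s)$ therefore all reduce to proving that the denominator $g(y):=\langle k,\omega(\xi_0)+\partial_y\bar h(y)\rangle$ is bounded away from zero on this domain.

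The small-divisor lower bound on $g$ is the heart of the argument, and I would obtain it by splitting $g$ into its Diophantine leading part and a controllable perturbation. By $\mathrm{(A2)}$, $|\langle k,\omega(\xi_0)\rangle|\ge\gamma_0/|k|^\tau$. Because the normal form is constructed so that $\bar h(y,\xi)=O(|y|^2)$, we have $\partial_y\bar h(0)=0$, and a Taylor expansion combined with $\textbf{\textsc{(H2)}}$ (which controls $\partial_y^2\bar h$ up to $M^*+\mu_0^{1/2}$) yields $|\partial_y\bar h(y)|\le s(M^*+1)$ on $D(s)$. Invoking $\textbf{\textsc{(H3)}}$ then gives
$$|k|\,|\partial_y\bar h(y)|\;\le\;K_+\,s\,(M^*+1)\;\le\;\frac{\gamma_0}{2|k|^\tau},$$
so that $|g(y)|\ge \gamma_0/(2|k|^\tau)$ uniformly on $D(s)$. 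This simultaneously establishes solvability and gives the $|i|=0$ case of $(3.8)$ once the numerator is controlled.

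To estimate the numerator, I would combine the standard Fourier decay bound with $(\ref{equ11})$ to conclude $|\sum_{\imath}p_{k\imath}y^\imath|\le\gamma_0^{n+m+2}s^m\mu\,e^{-|k|r}$, as already used in $(\ref{pki})$, and then separate the monomials by a Cauchy estimate in $y$ on the polydisc of radius $s$ to obtain $|p_{k\imath}|\le\gamma_0^{n+m+2}s^{m-|\imath|}\mu\,e^{-|k|r}$. Combined with the lower bound on $|g|$, this immediately yields the $|i|=0$ estimate.

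For $|i|=1,2$ I would apply the quotient rule to $f_{k\imath}=-\sqrt{-1}\,p_{k\imath}/g$. Each $y$-derivative of $g$ contributes a factor $|k|$ from the inner product, together with a bounded factor from a derivative of $\bar h$ (for $\partial_y^2\bar h$ via $\textbf{\textsc{(H2)}}$ directly; for $\partial_y^3\bar h$, which appears in $\partial_y^2 g$, via a Cauchy estimate on a polydisc slightly smaller than the analyticity domain of $\bar h$ inherited from the previous KAM step). Each power of $g^{-1}$ contributes $|k|^\tau/\gamma_0$ through the small-divisor bound. Tracking the two terms of $\partial_y(1/g)$ and the three terms of $\partial_y^2(1/g)$ produces precisely the exponents $|k|^{(|i|+1)\tau+|i|}\gamma_0^{n+m+1-|i|}$ claimed in $(3.8)$. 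The main technical nuisance is the bookkeeping for $|i|=2$, and in particular ensuring that the Cauchy estimate for $\partial_y^3\bar h$ does not force us off $D(s)$; this is handled by absorbing a harmless shrinkage into the constant $c_2$, using that $\bar h$ remains analytic on a slightly larger $y$-disc than the one on which $(3.8)$ is asserted.
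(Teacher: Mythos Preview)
Your approach is essentially the same as the paper's: write the solution explicitly as $f_{k\imath}=L_k^{-1}p_{k\imath}$ with $L_k=\sqrt{-1}\langle k,\omega(\xi_0)+\partial_y\bar h\rangle$, obtain the small-divisor lower bound $|L_k|\ge\gamma_0/(2|k|^\tau)$ from (A2) together with (H2)--(H3), control $p_{k\imath}$ by a Cauchy estimate, and then differentiate the quotient for $|i|=1,2$. Your discussion of $\partial_y^3\bar h$ via a Cauchy shrinkage is in fact more explicit than the paper, which simply records $\vert\|\partial_y^iL_k^{-1}\|\vert\le c_2|k|^{|i|}\vert\|L_k^{-1}\|\vert^{|i|+1}$ and absorbs everything into $c_2$.

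One point you omit: the norm $\vert\|\cdot\|\vert$ in the conclusion is the sum of the sup norm on $D(s)$ and the H\"older-$\beta$ seminorm in the parameter $\xi$, and your outline treats only the former. The paper handles the $C^\beta$ piece in parallel: since $\bar h_0=0$, hypothesis (H2) directly gives $\|\partial_y\bar h\|_{C^\beta}\le\mu_0^{1/2}|y|<s$, hence the same bound $\gamma_0/(2|k|^{\tau+1})$ as for the sup norm; and the Cauchy estimate for $p_{k\imath}$ is stated for the full $\vert\|\cdot\|\vert$ norm of $P$. The quotient-rule bookkeeping then carries the H\"older part through exactly as the sup norm. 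This is routine once noted, but it should appear in a complete proof.
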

\begin{proof}
For $\forall y\in D(s)$, by $\textbf{\text{(H2)}},\textbf{(H3)}$, we have
\begin{equation*}
\left\vert\partial_y\bar h\right\vert_{D(s)}=\left\vert(\partial_y\bar h-\partial_y\bar h_0)+\partial_y\bar h_0\right\vert_{D(s)}
\leq(1+M^*)\vert y\vert<(1+M^*)s
<\frac{\gamma_0}{2\vert k\vert^{\tau+1}}
\end{equation*}
and
\begin{equation*}
\left\|\partial_y\bar h\right\|_{C^\beta}=\sup_{\xi_*\neq\xi_{**},~\xi_*,\xi_{**}\in{O}}\frac{\left\vert\partial_y\bar h(y,\xi_*)-\partial_y\bar h(y,\xi_{**})\right\vert}{\vert\xi_*-\xi_{**}\vert^\beta}\leq\mu_0^\frac{1}{2}\vert y\vert<\vert s\vert<\frac{\gamma_0}{2\vert k\vert^{\tau+1}},
\end{equation*}
which imply that
\begin{equation}\label{eqb36}
\left\vert\left\|\partial_y\bar{h}\right\|\right\vert_{D(s)}<\frac{\gamma_0}{2\vert k\vert^{\tau+1}}.
\end{equation}
It follows from (\ref{eqb36}) and ${\rm(A2)}$ that
\begin{equation}\label{eqb37}
\left\vert\left\|\left\langle k,\omega(\xi_0)+\partial_y \bar h(y)\right\rangle\right\|\right\vert_{D(s)}>\frac{\gamma_0}{\vert k\vert^\tau}-\frac{\gamma_0}{2\vert k\vert^\tau}=\frac{\gamma_0}{2\vert k\vert^\tau}.
\end{equation}
Hence
\begin{equation}\label{eqb38}
L_k=\sqrt{-1}\left\langle k,\omega(\xi_0)+\partial_y\bar h(y)\right\rangle
\end{equation}
is invertible, and
\begin{equation}\label{fki}
f_{k\imath}=L_k^{-1} p_{k\imath},
\end{equation}
for all $y\in D(s)$, $0<\vert k\vert\leq K_+$, $\vert\imath\vert\leq m$.
Let $0<\vert k\vert\leq K_+$. We note by the first inequality of (\ref{pki}) and Cauchy estimate that
\begin{align}\label{eqb9}
\left\vert\left\|p_{k\imath}\right\|\right\vert&\leq\left\vert\left\| \partial_y^\imath P\right\|\right\vert_{\tilde{D}}e^{-\vert k\vert r}\leq\gamma_0^{n+m+2}s^{m-\vert\imath\vert}\mu e^{-\vert k\vert r},~~~~\vert\imath\vert\leq m,
\end{align}
and by (\ref{eqb37}) and (\ref{eqb38}) that
\begin{align}\label{lk-1}
\left\vert\left\|\partial_y^iL_k^{-1}\right\|\right\vert_{D(s)}&\leq c_2\left\vert k\right\vert^{\vert i\vert}\left\|\left\vert L_k^{-1}\right\|\right\vert_{D(s)}^{\vert i\vert+1}\leq c_2\frac{\vert k\vert^{(\vert i\vert+1)\tau+\vert i\vert}}{\gamma_0^{\vert i\vert+1}},~~~~\vert i\vert\leq2.
\end{align}
So, by (\ref{fki}), (\ref{eqb9}) and (\ref{lk-1}), we get
\begin{align*}
\left\vert\left\| \partial_y^if_{k\imath}\right\|\right\vert_{D(s)}&\leq c_2\frac{\vert k\vert^{(\vert i\vert+1)\tau+\vert i\vert}}{\gamma_0^{\vert i\vert+1}}\gamma_0^{n+m+2}s^{m-\vert\imath\vert}\mu e^{-\vert k\vert r}\\
&=c_2\vert k\vert^{(\vert i\vert+1)\tau+\vert i\vert}\gamma_0^{n+m+1-\vert i\vert}s^{m-\vert\imath\vert}\mu e^{-\vert k\vert r},~~~~\vert i\vert\leq 2.
\end{align*}

The proof is complete.
\end{proof}

Next, we apply the above transformation $\phi_F^1$ to Hamiltonian $H$, i.e.,
\begin{align*}
H\circ\phi_F^1=&(N+R)\circ\phi_F^1+(P-R)\circ\phi_F^1\\
=&(N+R)+\{N,F\}+\int_0^1\left\{(1-t)\{N,F\}+R,F\right\}\circ\phi_F^tdt\\
&+(P-R)\circ\phi_F^1\\
=&N+[R]+\int_0^1\left\{R_t,F\right\}\circ\phi_F^tdt+\left(P-R\right)\circ\phi_F^1\\
=&:\bar N_++\bar P_+,
\end{align*}
where
\begin{align}\label{eqb10}
& \bar N_+=N+[R]=e_++\langle\omega(\xi),y\rangle+\left\langle \sum_{j=0}^{\nu}p_{01}^j(\xi),y\right\rangle+\bar{h}_+(y,\xi),\\\label{equ5}
&e_+=e+p_{00}^\nu,\\\label{equ8}
&\bar{h}_+=\bar{h}(y,\xi)+[R]-p_{00}^\nu-\left\langle p_{01}^\nu(\xi),y\right\rangle,\\\label{eqb11}
& \bar P_+=\int_0^1\{R_t,F\}\circ\phi_F^tdt+(P-R)\circ\phi_F^1,\\
&R_t=(1-t)[R]+tR.\notag
\end{align}

\subsubsection{Translation}\label{subb3.3}
In this subsection, we will construct a translation so as to keep the frequency unchanged. It should be pointed out that we present a new way to move parameters, but in the usual KAM iteration, one has to dig out a decreasing series of parameter domains, in which the Diophantine condition doesn't hold, see \cite{chow,han,li1,poschel1,poschel,qian2,qian1}.

Consider the translation
$$\phi:x\rightarrow x,~~~~~y\rightarrow y,~~~~~\tilde{\xi}\rightarrow\tilde{\xi}+\xi_+-\xi,$$
where $\xi_+$ is to be determined.
Let
$$\Phi_+=\phi_F^1\circ\phi.$$
Then
\begin{align}
H\circ\Phi_+&=N_++P_+,\notag\\\label{eqb12}
N_+&=\bar N_+\circ\phi=e_++\left\langle\omega(\xi_{+}),y\right\rangle+\left\langle \sum_{j=0}^{\nu}p_{01}^j(\xi_+),y\right\rangle+\bar{h}_+(y,\xi_+)
,\\\label{eqb13}
P_+&=\bar P_+\circ\phi.
\end{align}

\subsubsection{Frequency-preserving}\label{sub3.3}
In this subsection, we will show that the frequency can be preserved in the iteration process.
Recall the topological degree condition $\rm(A0)$ and the weak convexity condition $\rm(A1)$. The former ensures that the parameter $\xi_+$ can be found in the parameter set to keep the frequency unchanged at this KAM step. The later assures that the distance between $\xi_+$ and $\xi$ is smaller than the distance between $\xi$ and $\xi_{\nu-1}$, i.e., the sequence of parameters is convergent after infinite steps of iteration. The following lemma is crucial to our arguments.
\begin{lemma}\label{leb3}
Assume that
\begin{align*}
\textbf{(\textsc{H4})}:\left\vert\left\|\sum_{j=0}^{\nu}p_{01}^j\right\|\right\vert_{D(s,r)}<\mu_0^{\frac{1}{2}}.
\end{align*}
There exists
$\xi_+\in B_{c\mu^{{1}/{L}}}(\xi)\subset O^o$
such that
\begin{align}\label{eqb14}
\omega(\xi_+)+\sum_{j=0}^{\nu}p_{01}^j(\xi_+)=\omega(\xi_0).%=\nabla h_+(\xi_+).%,~~\forall\nu\in\{0,1,2,\cdots\}.
\end{align}
%Moreover,
%\begin{align}\label{equ18}
%|\xi_+-\xi|<c\mu^\frac{1}{L}.
%\end{align}
\end{lemma}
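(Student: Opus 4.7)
The plan is to obtain $\xi_+$ as a zero, supplied by Brouwer's degree, of the continuous parameter map $\omega(\xi') + \sum_{j=0}^\nu p_{01}^j(\xi')$ at the value $\omega(\xi_0)$, and then to use the weak convexity condition ${\rm(A1)}$ to localize this zero within distance $c\mu^{1/L}$ of the current $\xi$. Throughout the induction I carry forward the frequency-preserving identity
\begin{equation*}
\omega(\xi) + \sum_{j=0}^{\nu-1} p_{01}^j(\xi) = \omega(\xi_0),
\end{equation*}
which is produced by applying the present lemma at the previous step, and which holds trivially at $\nu = 0$ (empty sum, $\xi = \xi_0$).

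For the existence part, I would set up the continuous homotopy $\Psi_t(\xi') := \omega(\xi') + t\sum_{j=0}^\nu p_{01}^j(\xi')$ for $t \in [0,1]$ linking $\omega$ to $\omega + \sum_{j=0}^\nu p_{01}^j$, and choose the radius $R := (2\mu_0^{1/2}/\sigma)^{1/L}$, so that $U := B_R(\xi_0) \subset O^o$ for $\mu_0$ sufficiently small. On $\partial U$ the weak convexity ${\rm(A1)}$ gives $|\omega(\xi') - \omega(\xi_0)| \geq \sigma R^L = 2\mu_0^{1/2}$, while the sup-norm part of $\textbf{\textsc{(H4)}}$ gives $|\sum_{j=0}^\nu p_{01}^j(\xi')| \leq \mu_0^{1/2}$, so $\Psi_t(\xi') \neq \omega(\xi_0)$ for every $t \in [0,1]$ and every $\xi' \in \partial U$. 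Homotopy invariance of the Brouwer degree then gives $\deg(\omega + \sum_{j=0}^\nu p_{01}^j,\, U,\, \omega(\xi_0)) = \deg(\omega,\, U,\, \omega(\xi_0))$. Condition ${\rm(A1)}$ forces $\omega^{-1}(\omega(\xi_0)) \cap O^o = \{\xi_0\} \subset U$, so excision combined with ${\rm(A0)}$ yields $\deg(\omega, U, \omega(\xi_0)) = \deg(\omega, O^o, \omega(\xi_0)) \neq 0$, whence equation~(\ref{eqb14}) has a solution $\xi_+ \in U \subset O^o$.

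For the localization estimate, I would subtract the inductive identity from $\omega(\xi_+) + \sum_{j=0}^\nu p_{01}^j(\xi_+) = \omega(\xi_0)$ to obtain
\begin{equation*}
\omega(\xi_+) - \omega(\xi) = \sum_{j=0}^{\nu-1}\left[p_{01}^j(\xi) - p_{01}^j(\xi_+)\right] - p_{01}^\nu(\xi_+).
\end{equation*}
Bounding the left side below by $\sigma|\xi_+-\xi|^L$ via ${\rm(A1)}$, bounding the first sum on the right above by $\mu_0^{1/2}|\xi_+-\xi|^\beta$ via the H\"older part of $\textbf{\textsc{(H4)}}$, and invoking $L \leq \beta$ together with $|\xi_+-\xi| \leq 1$ (which follows from the degree step since both $\xi_+$ and $\xi$ lie in a small neighborhood of $\xi_0$) so that $|\xi_+-\xi|^\beta \leq |\xi_+-\xi|^L$, one arrives at $\sigma|\xi_+-\xi|^L \leq \mu_0^{1/2}|\xi_+-\xi|^L + |p_{01}^\nu(\xi_+)|$. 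For $\mu_0$ small enough the first right-hand term is absorbed into the left, yielding $|\xi_+-\xi|^L \leq (2/\sigma)|p_{01}^\nu(\xi_+)|$. A Cauchy estimate applied to the truncation bound $|R|_{D_\alpha} \leq c\gamma_0^{n+m+2}s^m\mu$ from the truncation lemma produces $|p_{01}^\nu| \leq c\mu$, hence $|\xi_+-\xi| \leq c\mu^{1/L}$. The inductive closeness of $\xi$ to $\xi_0 \in O^o$ together with the smallness of $\mu$ ensures $B_{c\mu^{1/L}}(\xi) \subset O^o$.

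The hard part will be the delicate interplay between the sup-norm and H\"older parts of the composite norm $\vert\|\cdot\|\vert$: the topological-degree step needs the former to beat the weak-convexity lower bound on $\partial U$, while the localization step needs the latter together with $\beta \geq L$ to justify the absorption into the left-hand side. The constraint $L \leq \beta$ appearing in ${\rm(A1)}$ is precisely what ties these two uses together and is what makes the translation-based iteration (rather than the usual parameter excavation) viable in the merely continuous regularity setting.
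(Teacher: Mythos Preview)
Your proposal is correct and follows the same two-part strategy as the paper: existence of $\xi_+$ via Brouwer degree and homotopy invariance, then localization by subtracting the inductive frequency-preserving identity and invoking ${\rm(A1)}$ together with H\"older control on the $p_{01}^j$.

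There are two minor deviations worth flagging. First, the paper computes the degree directly on $O^o$, using only ${\rm(A0)}$ and the sup-norm part of $\textbf{\textsc{(H4)}}$ to run the homotopy; you instead restrict to a small ball $U=B_R(\xi_0)$ and appeal to ${\rm(A1)}$ plus excision. This extra step is unnecessary but harmless, and it has the side benefit of cleanly supplying $|\xi_+-\xi|\leq 1$, which is needed for the exponent comparison $|\xi_+-\xi|^\beta\leq|\xi_+-\xi|^L$ (a point the paper passes over silently). Second, in the localization the paper sums the individual H\"older bounds $\|p_{01}^j\|_{C^\beta}\leq c\mu_j$ from Lemma~\ref{leb1} to obtain the absorption constant $\sigma-c\sum_{j=0}^{\nu-1}\mu_j$, whereas you invoke the aggregate bound $\mu_0^{1/2}$ coming from $\textbf{\textsc{(H4)}}$ (strictly speaking applied at the previous step, or after splitting off $p_{01}^\nu$). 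Both routes give the same final estimate $|\xi_+-\xi|\leq c\mu^{1/L}$.
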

\begin{proof}
The proof will be completed by an induction on $\nu$. We start with the case $\nu=0$. It is obvious that $\omega(\xi_0)=\omega(\xi_0)$. Now assume that for some $\nu>0$ we have got
\begin{align}\label{equ1}
\omega(\xi_i)+\sum_{j=0}^{i-1}p_{01}^j(\xi_i)=\omega(\xi_0),~~\xi_i\in B_{c\mu_{i-1}^{{1}/{L}}}(\xi_{i-1})\subset O^o,~~~i=1,\cdots,\nu.
\end{align}
We need to find $\xi_+$ near $\xi$ such that
\begin{align}\label{equ2}
\omega(\xi_+)+\sum_{j=0}^{\nu}p_{01}^j(\xi_+)=\omega(\xi_0).
\end{align}
In view of the property of topological degree, $\textbf{\textsc{(H4)}}$ and $\rm(A0)$, we have
\begin{align*}
\deg\left(\omega(\cdot)+\sum_{j=0}^{\nu}p_{01}^j(\cdot),O^o,\omega(\xi_0)\right)=\deg\left(\omega(\cdot),O^o,\omega(\xi_0)\right)\neq0,
\end{align*}
i.e., there exists at least a $\xi_+\in O^o$ such that (\ref{eqb14}) holds.

Next, we estimate $\vert\xi_+-\xi\vert$. (\ref{equ10}) in Lemma \ref{leb1} implies that
\begin{align*}
\left\|p_{01}^j\right\|_{C^\beta}<c\mu_j,~~~j=0,1,\cdots,\nu,
\end{align*}
i.e.,
\begin{align}\label{equ16}
\left\vert p_{01}^j(\xi_+)-p_{01}^j(\xi)\right\vert<c\mu_j\left\vert\xi_+-\xi\right\vert^\beta,~~\forall\xi_+, \xi\in{O}.
\end{align}
According to (\ref{equ1}) and (\ref{equ2}), we get
\begin{align}\label{equ3}
\omega(\xi_+)-\omega(\xi)+\sum_{j=0}^{\nu-1}\left(p_{01}^{j}(\xi_+)-p_{01}^j{(\xi)}\right)=-p_{01}^{\nu}(\xi_+).
\end{align}
This together with $\rm(A1)$ and (\ref{equ16}) yields
\begin{align}
\left\vert p_{01}^\nu(\xi_+)\right\vert&=\left\vert\omega(\xi_+)-\omega(\xi)+\sum_{j=0}^{\nu-1}(p_{01}^{j}(\xi_+)-p_{01}^j{(\xi)})\right\vert\notag\\
&\geq\left\vert\omega(\xi_+)-\omega(\xi)\right\vert-\sum_{j=0}^{\nu-1}\left\vert p_{01}^{j}(\xi_+)-p_{01}^j{(\xi)}\right\vert\notag\\
&\geq\sigma\left\vert\xi_+-\xi\right\vert^L-c\sum_{j=0}^{\nu-1}\mu_j\left\vert\xi_+-\xi\right\vert^\beta\notag\\\label{equ19}
&\geq(\sigma-c\sum_{j=0}^{\nu-1}\mu_j)\left\vert\xi_+-\xi\right\vert^L,
\end{align}
where the last inequality follows from $0<L\leq\beta$ in $\rm(A1)$. Then, by (\ref{equ19}) and (\ref{equ10}) in Lemma \ref{leb1}, we have
\begin{align*}
\vert\xi_+-\xi\vert^L&\leq\frac{p_{01}^\nu(\xi_+)}{\sigma-c\sum_{j=0}^{\nu-1}\mu_j}<\frac{c\mu}{\sigma-c\sum_{j=0}^{\nu-1}\mu_j}<\frac{2c\mu}{\sigma},
\end{align*}
which implies $\xi_+\in B_{c\mu^{{1}/{L}}}(\xi)$. From $\xi\in O^o$ in (\ref{equ1}) and the fact $\varepsilon$ is small enough (i.e., $\mu$ is small enough), we have $B_{c\mu^{{1}/{L}}}(\xi)\subset O^o$.

The proof is complete.
\end{proof}

\subsubsection{Estimate on $N_+$}
Now, we give the estimate on the next step $N_+$.
\begin{lemma}\label{leb4}
There is a constant $c_3$ such that the following holds:
\begin{align}\label{eqb15}
&\left\vert\xi_{+}-\xi\right\vert\leq c_3\mu^\frac{1}{L},\\\label{eqb16}
&\left\vert e_+-e\right\vert\leq c_3s^m\mu,\\\label{equ4}
&\left\vert\left\|\bar h_+-\bar h\right\|\right\vert_{D(s)}\leq
c_3s^m\mu.
\end{align}
\end{lemma}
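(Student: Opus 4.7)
The plan is to verify the three estimates in order, using the explicit formulas (\ref{equ5}) and (\ref{equ8}) for $e_+$ and $\bar h_+$ together with the Cauchy-type Taylor--Fourier coefficient bound (\ref{eqb9}) already established in the homological-equation analysis.

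Estimate (\ref{eqb15}) is essentially a restatement of Lemma \ref{leb3}: the conclusion $\xi_+\in B_{c\mu^{1/L}}(\xi)$ directly yields $\vert\xi_+-\xi\vert\le c_3\mu^{1/L}$ after renaming the constant. For (\ref{eqb16}), equation (\ref{equ5}) reads $e_+-e=p_{00}^\nu$, so applying (\ref{eqb9}) with $k=0$, $\imath=0$ gives $\vert p_{00}^\nu\vert\le c\gamma_0^{n+m+2}s^m\mu\le c\,s^m\mu$ since $\gamma_0\le 1$; the Hölder-in-$\xi$ part of $\vert\|e_+-e\|\vert$ is handled identically using the $C^\beta$ version of (\ref{eqb9}) derived from (\ref{equ15}).

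The more substantive estimate is (\ref{equ4}). I rewrite (\ref{equ8}) as
\begin{equation*}
\bar h_+ - \bar h = [R] - p_{00}^\nu - \langle p_{01}^\nu(\xi), y\rangle = \sum_{2\le\vert\imath\vert\le m} p_{0\imath}^\nu(\xi)\,y^\imath,
\end{equation*}
a $y$-polynomial of degree at most $m$ with no constant or linear term. Feeding the coefficient bound $\vert p_{0\imath}^\nu\vert\le c\gamma_0^{n+m+2}s^{m-\vert\imath\vert}\mu$ from (\ref{eqb9}) (specialized to $k=0$) into this expansion and using $\vert y\vert<s$ on $D(s)$, each monomial is bounded by $c\gamma_0^{n+m+2}s^m\mu$, and summing over the finitely many multi-indices with $2\le\vert\imath\vert\le m$ delivers $\vert\bar h_+-\bar h\vert_{D(s)}\le c\,s^m\mu$. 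The $\|\cdot\|_{C^\beta}$ half of the triple-bar norm follows from the same algebra applied to $\|p_{0\imath}^\nu\|_{C^\beta}$, controlled through $\|R\|_{C^\beta}\le c\gamma_0^{n+m+2}s^m\mu$ from (\ref{equ15}). Choosing $c_3$ to dominate all constants completes the proof.

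The only point meriting caution is confirming that the coefficient bound (\ref{eqb9}) is genuinely available on the $y$-ball $D(s)=\{\vert y\vert<s\}$ appearing in (\ref{equ4}), rather than only on the shrunken $D_\alpha$ used in Lemma \ref{leb1}: this is immediate because (\ref{eqb9}) is obtained by a direct Cauchy estimate of $\partial_y^\imath P$ against the supremum of $P$ on the ambient domain $D(s,r)$, so restricting to $\{\vert y\vert<s\}$ introduces no additional loss. With this verified, all three inequalities follow routinely from the explicit expressions for $\xi_+$, $e_+$, and $\bar h_+$.
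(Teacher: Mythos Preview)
Your proof is correct and follows the same approach as the paper---invoking Lemma \ref{leb3} for (\ref{eqb15}) and the explicit formulas (\ref{equ5}), (\ref{equ8}) for the other two estimates---with the only difference that you have spelled out the coefficient-by-coefficient bounds which the paper's one-line proof leaves implicit. The lone cosmetic point is that (\ref{eqb9}) is stated in the paper for $0<\vert k\vert\le K_+$, but its extension to $k=0$ is immediate from the same Cauchy argument, exactly as you anticipate in your final paragraph.
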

\begin{proof}
It is obvious by $\xi_+\in B_{c\mu^{{1}/{L}}}(\xi)$ in Lemma \ref{leb3} that (\ref{eqb15}) holds. It follows from (\ref{equ5}) and (\ref{equ8}) that (\ref{eqb16}) and (\ref{equ4}) hold.
\end{proof}

\subsubsection{Estimate on $\Phi_+$}
Recall that $F$ is as in (\ref{eqb5}) with the coefficients and its estimate given by Lemma \ref{le2}. Then, we have the following estimate on $F$.
\begin{lemma}\label{leb5}
There is a constant $c_4$ such that for all $\vert j\vert+\vert i\vert\leq 2$,
\begin{align}\label{equ17}
&\left\vert\left\| \partial_x^j\partial_y^iF\right\|\right\vert_{\hat D}\leq c_4\gamma_0^{n+m+1}s^{m-\vert i \vert}\mu\Gamma(r-r_+).
\end{align}
\end{lemma}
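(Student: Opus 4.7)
The plan is to estimate $\partial_x^j \partial_y^i F$ directly by differentiating the Fourier--Taylor series (\ref{eqb5}) term by term and invoking the coefficient bound (\ref{eqb8}) from Lemma \ref{le2}. Since only the coefficients $f_{k\imath}$ depend on the parameter $\xi$, both parts of the norm $\vert\|\cdot\|\vert = \vert\cdot\vert + \|\cdot\|_{C^\beta}$ can be treated in exactly parallel fashion; I will carry out the computation for $\vert\cdot\vert_{\hat D}$ and remark that the same estimate applies to $\|\cdot\|_{C^\beta}$ because (\ref{eqb8}) is stated for the full $\vert\|\cdot\|\vert$-norm.

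First, termwise differentiation gives
\begin{equation*}
\partial_x^j \partial_y^i F = \sum_{0<|k|\le K_+,\,|\imath|\le m} (\sqrt{-1}\, k)^{j}\, \partial_y^i\!\bigl(f_{k\imath}(y)\, y^{\imath}\bigr)\, e^{\sqrt{-1}\langle k,x\rangle}.
\end{equation*}
Applying the Leibniz rule to $\partial_y^i (f_{k\imath} y^{\imath})$ produces a finite number of cross terms of the form $(\partial_y^{i'} f_{k\imath})(\partial_y^{i-i'} y^{\imath})$ with $i'\le i$. On $\hat D \subset D(s)$ we have $|y|<s$ and $|\imath|\le m$, so $|\partial_y^{i-i'} y^{\imath}| \le c\, s^{|\imath|-|i|+|i'|}$, and Lemma \ref{le2} gives
\begin{equation*}
\bigl|\bigl\|\partial_y^{i'} f_{k\imath}\bigr\|\bigr|_{D(s)} \le c_2\, |k|^{(|i'|+1)\tau + |i'|}\, \gamma_0^{n+m+1-|i'|}\, s^{m-|\imath|}\, \mu\, e^{-|k|r}.
\end{equation*}
Multiplying these and using $|i'|\le|i|\le 2$ together with $\gamma_0\le 1$, one bounds each summand by $c\, |k|^{|j|+(|i|+1)\tau+|i|}\, \gamma_0^{n+m+1}\, s^{m-|i|}\, \mu\, e^{-|k|r}$.

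Next I account for the exponentials. On $\hat D$ one has $|\operatorname{Im} x| < r_+ + \tfrac{7}{8}(r-r_+)$, so $|e^{\sqrt{-1}\langle k,x\rangle}|\le e^{|k|(r_+ + \frac{7}{8}(r-r_+))}$, and combining with $e^{-|k|r}$ produces the clean factor $e^{-|k|(r-r_+)/8}$. Since $|j|+|i|\le 2$, the total polynomial power of $|k|$ is $|j|+(|i|+1)\tau+|i| \le 3\tau+2 \le 3\tau+5$, so that
\begin{equation*}
\sum_{0<|k|\le K_+} |k|^{|j|+(|i|+1)\tau+|i|}\, e^{-|k|(r-r_+)/8} \;\le\; \Gamma(r-r_+).
\end{equation*}
The sum over $|\imath|\le m$ contributes only a fixed combinatorial constant, which is absorbed into $c_4$. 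Collecting everything yields (\ref{equ17}).

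I don't expect a serious obstacle here; the argument is a standard KAM coefficient estimate. The only mildly delicate point is bookkeeping: one must check that the weight $|k|^{3\tau+5}$ built into $\Gamma$ is strong enough to dominate $|k|^{|j|+(|i|+1)\tau+|i|}$ for every $|j|+|i|\le 2$ (the worst case being $|i|=2,|j|=0$, which gives $3\tau+2$), and one must verify that the Hölder-norm side of $\vert\|\cdot\|\vert$ is controlled by the very same bound from Lemma \ref{le2} — which it is, because differentiation in $x$ and $y$ commutes with the Hölder difference quotient in $\xi$, and the monomial factors $y^{\imath}$ carry no $\xi$-dependence.
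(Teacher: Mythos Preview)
Your proof is correct and follows exactly the paper's approach: termwise differentiation of (\ref{eqb5}), application of the coefficient estimate (\ref{eqb8}) from Lemma \ref{le2}, and summation against $\Gamma(r-r_+)$. One small correction in your bookkeeping: the passage from $\gamma_0^{\,n+m+1-|i'|}s^{\,m-|i|+|i'|}$ to $\gamma_0^{\,n+m+1}s^{\,m-|i|}$ needs $(s/\gamma_0)^{|i'|}\le 1$, which follows from $s<\gamma_0$ (a consequence of \textbf{(H3)} or of the definition of $s_0$), not from $\gamma_0\le 1$ alone.
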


\begin{proof}
By (\ref{eqb5}) and (\ref{eqb8}), we have
\begin{align*}
\left\vert\left\| \partial_x^j\partial_y^iF\right\|\right\vert_{\hat D}&\leq \sum_{\vert\imath\vert\leq m,0<\vert k\vert\leq K_+}\vert k\vert^j\left\vert\left\|\partial_y^i(f_{k\imath}y^\imath)\right\|\right\vert_{D(s)}e^{\vert k\vert(r_++\frac{7}{8}(r-r_+))}\\
&\leq c_4\sum_{0<\vert k\vert\leq K_+}\vert k\vert^{\tau+\vert j\vert}\gamma_0^{n+m+1}s^{m-\vert i\vert}\mu e^{-\vert k\vert\frac{r-r_+}{8}}\\
&\leq c_4\gamma_0^{n+m+1}s^{m-\vert i\vert}\mu\Gamma(r-r_+).
\end{align*}

The proof is complete.
\end{proof}

\begin{lemma}\label{leb6}
Assume that
\begin{align*}
&\textbf{\textsc{(H5)}}:c_4s^{m-1}\mu\Gamma(r-r_+)<\frac{1}{8}(r-r_+),\\
&\textbf{\textsc{(H6)}}:c_4s^m\mu\Gamma(r-r_+)<\frac{1}{8}\alpha s.
\end{align*}
Then the following holds.
\begin{description}
\item[\rm(1)]For all $0\leq t\leq 1$, the mappings
\begin{align}\label{eqb17}
\phi_F^t&:D_{\frac{1}{4}\alpha}\rightarrow D_{\frac{1}{2}\alpha},\\\label{eqb18}
\phi&:O\rightarrow O_+,
\end{align}
are well defined.
\item[\rm(2)]$\Phi_+:D_+\rightarrow D(s,r).$
\item[\rm(3)]There is a constant $c_5$ such that
\begin{align*}
\left\vert\left\|\phi_F^t-id\right\|\right\vert_{\tilde{D}}&\leq c_5\mu\Gamma(r-r_+),\\
\left\vert\left\|D\phi_F^t-Id\right\|\right\vert_{\tilde{D}}&\leq c_5\mu\Gamma(r-r_+),\\
\left\vert\left\|D^2\phi_F^t\right\|\right\vert_{\tilde{D}}&\leq c_5\mu\Gamma(r-r_+).
\end{align*}
\item[\rm(4)]
\begin{align*}
\left\vert\left\|\Phi_+-id\right\|\right\vert_{\tilde{D}}&\leq c_5\mu\Gamma(r-r_+),\\
\left\vert\left\|D\Phi_+-Id\right\|\right\vert_{\tilde{D}}&\leq c_5\mu\Gamma(r-r_+),\\
\left\vert\left\|D^2\Phi_+\right\|\right\vert_{\tilde{D}}&\leq c_5\mu\Gamma(r-r_+).
\end{align*}
\end{description}
\end{lemma}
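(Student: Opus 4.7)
The plan is to derive all four conclusions from the bounds on $F$ and its derivatives furnished by Lemma \ref{leb5}. Writing the Hamiltonian vector field as $X_F = (-\partial_x F, \partial_y F)$, and using $0 < \gamma_0 \le 1$, Lemma \ref{leb5} gives
\[
|\partial_y F|_{\hat D} \le c_4\, s^{m-1}\mu\,\Gamma(r-r_+), \qquad
|\partial_x F|_{\hat D} \le c_4\, s^{m}\mu\,\Gamma(r-r_+),
\]
so that $\textbf{\textsc{(H5)}}$ and $\textbf{\textsc{(H6)}}$ say exactly that these two quantities are strictly less than $\tfrac18(r-r_+)$ and $\tfrac18 \alpha s$ respectively. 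These are precisely the widths separating consecutive domains $D_{i/8\alpha}$ in the $x$- and $y$-directions, so the Hamiltonian flow cannot escape before time $1$.

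For part (1), I would integrate $\dot z = X_F(z)$ starting from an arbitrary $(y_0,x_0) \in D_{1/4\alpha}$ and run a continuity-in-$t$ argument: so long as the orbit stays in $D_{1/2\alpha}$, the displacement in $y$ is at most $\int_0^t |\partial_x F|\, d\tau \le c_4 s^m \mu\, \Gamma(r-r_+) < \tfrac18 \alpha s$, and the displacement in $\operatorname{Im} x$ is at most $c_4 s^{m-1}\mu\,\Gamma(r-r_+) < \tfrac18(r-r_+)$, which forbids exit from $D_{1/2\alpha}$ on $[0,1]$. The map $\phi$ is a pure translation of the parameter of magnitude $|\xi_+ - \xi| \le c_3 \mu^{1/L}$ by Lemma \ref{leb4}, so it sends $O$ into $O_+$ once $\varepsilon$ (and hence $\mu$) is small enough to absorb $c_3$. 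Part (2) is then immediate by composition: $\Phi_+ = \phi_F^1 \circ \phi$ sends $D_+ = D_{1/8\alpha}$ into $D_{1/2\alpha} \subset D(s,r)$, the outer inclusion following from the telescoping definition of the $D_{i/8\alpha}$.

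For the quantitative estimates in (3), I would pass from $\hat D$ to $\tilde D$ by Cauchy's estimate, which costs factors of $s^{-1}$ and $(r-r_+)^{-1}$ on successive derivatives but whose cost is already built into $\textbf{\textsc{(H5)}}$, $\textbf{\textsc{(H6)}}$. The bound on $\phi_F^t - id$ is the integral of $X_F$ just as above. For $D\phi_F^t - \mathrm{Id}$ I would differentiate the flow relation in the initial condition to obtain
\[
D\phi_F^t = \mathrm{Id} + \int_0^t DX_F(\phi_F^\tau)\, D\phi_F^\tau\, d\tau,
\]
and apply Gronwall with $|DX_F| \le c\,\mu\,\Gamma(r-r_+)$ to get the claimed bound; one more differentiation yields the $D^2 \phi_F^t$ estimate. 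Part (4) follows from the chain rule: since $\phi$ shifts only the $\xi$-variable and acts as the identity on $(y,x)$, one has $D\Phi_+ = D\phi_F^1 \circ \phi$ and $D^2 \Phi_+ = D^2\phi_F^1 \circ \phi$, so the estimates in (3) transfer directly after possibly enlarging $c_5$.

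The main technical difficulty is nothing conceptually deep but rather careful bookkeeping: one must check that the nested inclusions defining the $D_{i/8\alpha}$ are exactly calibrated so that each Cauchy loss of a factor $s^{-1}$ or $(r-r_+)^{-1}$ is absorbed into $\textbf{\textsc{(H5)}}$ or $\textbf{\textsc{(H6)}}$ via $\Gamma(r-r_+)$, and that the constants $c_4, c_5$ are independent of the iteration index $\nu$. Beyond this, the argument is the standard time-$1$ flow lemma for small Hamiltonians.
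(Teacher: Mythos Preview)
Your proposal is correct and follows essentially the same route as the paper: a $t_*$-type continuity argument for the flow in (1), the translation bound (\ref{eqb15}) for (\ref{eqb18}), composition for (2), the integral flow formula plus Gronwall for (3), and the chain rule for (4). One minor slip: passing from $\hat D$ to $\tilde D$ is just restriction (since $\tilde D\subset\hat D$), not a Cauchy estimate---Lemma~\ref{leb5} already supplies the derivative bounds of $F$ up to order two directly on $\hat D$.
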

\begin{proof}
(1)~~(\ref{eqb18}) immediately  follows from (\ref{eqb15}) and the definition of $O_+$.
To verify (\ref{eqb17}), we denote $\phi_{F_1}^t$, $\phi_{F_2}^t$ as the components of $\phi_{F}^t$ in the $y$, $x$ planes, respectively. Let $X_F=(F_y,-F_x)^\top$ be the vector field generated by $F$. Then
\begin{align}\label{eqb19}
\phi_F^t=id+\int_0^tX_F\circ \phi_F^udu,~~~~0\leq t\leq 1.
\end{align}
For any $(y,x)\in D_{\frac{1}{4}\alpha}$, we let $t_*=\sup\{t\in[0,1]:\phi_F^t(y,x)\in D_\alpha\}$. Then for any $0\leq t\leq t_*$, by $(y,x)\in D_{\frac{1}{4}\alpha}$, (\ref{equ17}) in Lemma \ref{leb5}, $\textbf{\textsc{(H5)}}$ and \textbf{\textsc{(H6)}}, we can get the following estimates:
\begin{align*}
\left\vert\left\|\phi_{F_1}^t(y,x)\right\|\right\vert_{D_{\frac{1}{4}\alpha}}&\leq\left\vert y\right\vert+\int_0^t\left\vert\left\|F_x\circ\phi_F^u\right\|\right\vert_{D_{\frac{1}{4}\alpha}}du\\
&\leq \frac{1}{4}\alpha s+c_4s^{m}\mu\Gamma(r-r_+)\\
&<\frac{3}{8}\alpha s,\\
\left\vert\left\|\phi_{F_2}^t(y,x)\right\|\right\vert_{D_{\frac{1}{4}\alpha}}&\leq\vert x\vert+\int_0^t\left\vert\left\|F_y\circ\phi_F^u\right\|\right\vert_{D_{\frac{1}{4}\alpha}}du\\
&\leq r_++\frac{1}{8}(r-r_+)+c_4s^{m-1}\mu\Gamma(r-r_+)\\
&<r_++\frac{2}{8}(r-r_+).
\end{align*}
Thus, $\phi_F^t\in D_{\frac{1}{2}\alpha}\subset D_\alpha$, i.e. $t_*=1$ and (1) holds.

(2) It follows from (1) that (2) holds.

(3) Using (\ref{equ17}) in Lemma \ref{leb5} and (\ref{eqb19}), we immediately have
\begin{align*}
\left\vert\left\|\phi_F^t-id\right\|\right\vert_{\tilde{D}}\leq c_5\mu\Gamma(r-r_+).
\end{align*}
By (\ref{equ17}) in Lemma \ref{leb5}, (\ref{eqb19}) and Gronwall Inequality, we get
\begin{align*}
\left\vert\left\|D\phi_F^t-Id\right\|\right\vert_{\tilde{D}}&\leq\left\vert\left\|\int_0^tDX_F\circ \phi_F^\lambda D\phi_F^\lambda d\lambda\right\|\right\vert_{\tilde{D}}\\
&\leq\int_0^t\left\vert\left\|DX_F\circ \phi_F^\lambda\right\|\right\vert_{\tilde{D}}\left\vert\left\|D\phi_F^\lambda-Id\right\|\right\vert_{\tilde{D}}d\lambda+\int_0^t\left\vert\left\|DX_F\circ \phi_F^\lambda\right\|\right\vert_{\tilde{D}}d\lambda\\
&\leq c_5\mu\Gamma(r-r_+).
\end{align*}
It follows from the induction and a similar argument that we have the estimates on the 2-order derivatives of $\phi_F^t$, i.e.,
\begin{align*}
\left\vert\left\|D^2\phi_F^t\right\|\right\vert_{\tilde{D}}\leq c_5\mu\Gamma(r-r_+).
\end{align*}

(4) now follows from (3).

The proof is complete.
\end{proof}

\subsubsection{Estimate on $P_+$}
\indent In the following, we estimate the next step $P_+$.
\begin{lemma}\label{leb7}
Assume $\textbf{\textsc{(H1)}}$-$\textbf{\textsc{(H6)}}$. Then there is a constant $c_6$ such that,
\begin{equation}\label{eqb20}
\left\vert\left\|P_+\right\|\right\vert_{D_+}\leq c_6\gamma_0^{n+m+2}s^m\mu^2(\Gamma^2(r-r_+)+\Gamma(r-r_+)).
\end{equation}
Moreover, if
\begin{align*}
\textbf{\textsc{(H7)}}:\mu^\rho(\Gamma^2(r-r_+)+\Gamma(r-r_+))\leq1
\end{align*}
then,
\begin{equation}\label{equ20}
\left\vert\left\|P_+\right\|\right\vert_{D_+}\leq \gamma_0^{n+m+2}s_+^m\mu_+.
\end{equation}
\end{lemma}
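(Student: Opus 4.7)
The strategy is to estimate the two components of $\bar P_+ = \int_0^1\{R_t,F\}\circ\phi_F^t\,dt + (P-R)\circ\phi_F^1$ separately on a suitable intermediate domain, then transfer the estimate to $P_+ = \bar P_+\circ\phi$ using the fact that $\phi$ is merely a shift in the parameter variable and therefore leaves both the sup-norm on $(y,x)$ and the Hölder seminorm on $\xi$ invariant. The key domain chain is $D_+ = D_{\frac{1}{8}\alpha} \subset D_{\frac{1}{4}\alpha}$, and by Lemma~\ref{leb6}(1) we have $\phi_F^t(D_{\frac{1}{4}\alpha})\subset D_{\frac{1}{2}\alpha}\subset D_\alpha$, so every composition below is well-defined on $D_+$.

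First I will bound the Poisson-bracket integrand. From Lemma~\ref{leb1}, $\vert\|R\|\vert_{D_\alpha}\leq c_1\gamma_0^{n+m+2}s^m\mu$, so by Cauchy's estimate on $D_{\frac{1}{2}\alpha}\subset D_\alpha$ (the gaps in both action and angle radii being comparable to $\alpha s$ and $r-r_+$ respectively) one gets $\vert\|\partial_yR\|\vert,\vert\|\partial_xR\|\vert$ on $D_{\frac{1}{2}\alpha}$ bounded in terms of $\gamma_0^{n+m+2}s^m\mu$ divided by the appropriate radius. Combining these with the gradient estimate $\vert\|\partial_y F\|\vert,\vert\|\partial_x F\|\vert\leq c_4\gamma_0^{n+m+1}s^{m-1}\mu\,\Gamma(r-r_+)$ from Lemma~\ref{leb5} yields
\[
\vert\|\{R_t,F\}\|\vert_{D_{\frac{1}{2}\alpha}} \leq c\,\gamma_0^{n+m+2}s^m\mu^2\,\Gamma(r-r_+),
\]
after using the factor $\gamma_0$ absorbed into the Cauchy radii and the identity $\alpha=\mu^{2\rho}$. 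Composing with $\phi_F^t:D_{\frac{1}{4}\alpha}\to D_{\frac{1}{2}\alpha}$ preserves the sup-norm, and the Hölder-in-$\xi$ seminorm is controlled in the standard way: the Hölder constant of $g\circ\phi_F^t$ is bounded by $\|g\|_{C^\beta}$ plus $\|\partial g\|\cdot\|\phi_F^t\|_{C^\beta}$, both already estimated in Lemma~\ref{leb5} and Lemma~\ref{leb6}. Integration in $t\in[0,1]$ preserves the bound.

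For the truncation-error piece I apply directly Lemma~\ref{leb1}: $\vert\|P-R\|\vert_{D_\alpha}\leq c_1\gamma_0^{n+m+2}s^m\mu^2$. Since $\phi_F^1(D_{\frac{1}{4}\alpha})\subset D_{\frac{1}{2}\alpha}\subset D_\alpha$, the sup-norm of $(P-R)\circ\phi_F^1$ on $D_{\frac{1}{4}\alpha}$ is bounded by the same quantity. The Hölder-$\xi$ estimate again uses the chain rule plus the Hölder bound on $\phi_F^1$, producing an additional multiplicative $\Gamma(r-r_+)$ factor which cleanly fits under $\mu^2(\Gamma^2+\Gamma)$. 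Collecting the two pieces and then composing with $\phi$ (which is the identity on $(y,x)$) yields the announced bound \eqref{eqb20} with a suitable $c_6$.

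For \eqref{equ20}, the final step is purely arithmetic. Using $s_+=\tfrac18\alpha s$ with $\alpha=\mu^{2\rho}$ and $\mu_+=8^mc_0\mu^{1+\rho}$, one computes
\[
s_+^m\mu_+ \;=\; c_0\,s^m\,\mu^{2m\rho+1+\rho} \;=\; c_0\,s^m\,\mu^{1+(2m+1)\rho}.
\]
Since $\rho=\frac{1}{2(m+1)}$, one has $2-\bigl(1+(2m+1)\rho\bigr)=\rho$, so
\[
c_6\gamma_0^{n+m+2}s^m\mu^2(\Gamma^2+\Gamma) \;=\; c_6\gamma_0^{n+m+2}s^m\mu^{1+(2m+1)\rho}\cdot\mu^\rho(\Gamma^2+\Gamma),
\]
and $\textbf{\textsc{(H7)}}$ together with $c_0\geq c_6$ closes the inequality. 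The main delicate points in this plan are the bookkeeping of the Hölder-$C^\beta$ seminorm through the two compositions with $\phi_F^t$ (since both $F$ and hence $\phi_F^t$ depend on $\xi$), and ensuring that the Cauchy-estimate losses in the radii $\alpha s$ and $r-r_+$ are exactly absorbed by the $\Gamma(r-r_+)$ weight and the $\mu^{2\rho}$ factor — the remainder of the argument is then standard.
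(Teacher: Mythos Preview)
Your proposal follows essentially the same route as the paper: estimate the Poisson-bracket integral and the truncation remainder separately on $D_{\frac14\alpha}$ using Lemmas~\ref{leb1}, \ref{leb5}, and \ref{leb6}, then carry out the identical arithmetic with $s_+=\tfrac18\alpha s$, $\mu_+=8^mc_0\mu^{1+\rho}$, and $2-(1+(2m+1)\rho)=\rho$ under \textbf{(H7)}. The only minor deviation is that the paper records the Poisson-bracket contribution as $c\gamma_0^{n+m+2}s^m\mu^2\Gamma^2$ rather than $\Gamma$ (both the Cauchy loss on $\partial R_t$ and the estimate on $\partial F$ introduce a $\Gamma$-type factor), but since the target bound already carries $\Gamma^2+\Gamma$ this is immaterial.
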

\begin{proof}
By (\ref{equ9}) and (\ref{equ10}) in Lemma \ref{leb1}, (\ref{equ17}) in Lemma \ref{leb5} and Lemma \ref{leb6} (3), we have that, for all $0\leq t\leq 1$,
\begin{align*}
\left\vert\left\| \{R_t,F\}\circ\phi_F^t\right\|\right\vert_{D_{\frac{1}{4}\alpha}}&\leq c\gamma_0^{n+m+2}s^m\mu^2\Gamma^2(r-r_+),\\
\left\vert\left\| (P-R)\circ\phi_F^1\right\|\right\vert_{D_{\frac{1}{4}\alpha}}&\leq c\gamma_0^{n+m+2}s^m\mu^2\Gamma(r-r_+).
\end{align*}
So, by (\ref{eqb11}),
\begin{equation*}
\left\vert\left\|\bar P_+\right\|\right\vert_{D_{\frac{1}{4}\alpha}}\leq c\gamma_0^{n+m+2}s^m\mu^2\left(\Gamma^2(r-r_+)+\Gamma(r-r_+)\right).
\end{equation*}
By \textbf{(H7)}, we see that,
\begin{align*}
\left\vert\left\|P_+\right\|\right\vert_{D_+}&\leq 8^mc_0\mu^{1+\rho}s_+^m\mu^{1-2\rho-\frac{m}{m+1}}\gamma_0^{n+m+2}\left(\mu^\rho\left(\Gamma^2(r-r_+)+\Gamma(r-r_+)\right)\right)\\
&\leq\gamma_0^{n+m+2}s_+^m\mu_+,
\end{align*}
which implies (\ref{equ20}).

The proof is complete.
\end{proof}

This completes one cycle of KAM steps.

\section{Proof of Theorem \ref{th1}}\label{sec:b4}

\subsection{Iteration lemma}
In this subsection, we will prove an iteration lemma which guarantees the inductive construction of the transformations in all KAM steps.

Let $r_0,s_0,\gamma_0,
\mu_0,H_0,N_0,e_0,\bar h_0,P_0$ be given at the beginning of Section \ref{sec:b3} and let
$D_0=D(s_0,r_0)$, $K_0=0$, $\Phi_0=id$. We define the following sequence inductively for all $\nu=1,2,\cdots$.
\begin{align*}
r_\nu&=r_0\left(1-\sum_{i=1}^\nu\frac{1}{2^{i+1}}\right),\\
s_\nu&=\frac{1}{8}\alpha_{\nu-1}s_{\nu-1},\\
\alpha_\nu&=\mu_\nu^{2\rho}=\mu_\nu^{\frac{1}{m+1}},\\
\mu_\nu&=8^mc_0\mu_{\nu-1}^{1+\rho},\\
K_\nu&=\left(\left[\log\left(\frac{1}{\mu_{\nu-1}}\right)\right]+1\right)^{3\eta},\\
\tilde{D}_\nu&=D\left(\frac{1}{2} s_\nu, r_\nu+\frac{6}{8}\left(r_{\nu-1}-r_\nu\right)\right).
\end{align*}
\begin{lemma}\label{leb8}
Denote
\begin{align*}
\mu_*=\frac{\mu_0}{\left(M^*+2\right)^{m-1}K_1^{(\tau+1)(m-1)}}.
\end{align*}
If $\varepsilon$ is small enough, then the KAM step described on the above is valid for all $\nu=0,1,\cdots$, resulting the sequences
$$H_\nu, N_\nu, e_\nu, \bar h_\nu, P_\nu, \Phi_\nu,$$
$\nu=1,2,\cdots,$ with the following properties:
\begin{description}
\item[\rm(1)]
\begin{align}\label{eq31}
\left\vert e_{\nu+1}-e_{\nu}\right\vert&\leq\frac{\mu_*^{\frac{1}{2}}}{2^{\nu}},\\\label{eq32}
\left\vert e_\nu-e_{0}\right\vert&\leq2\mu_*^{\frac{1}{2}},\\\label{eq33}
\left\vert\left\|\bar h_{\nu+1}-\bar h_{\nu}\right\|\right\vert_{D(s_{\nu})}&\leq\frac{\mu_*^{\frac{1}{2}}}{2^{\nu}},\\\label{eq34}
\left\vert\left\|\bar h_\nu-\bar h_{0}\right\|\right\vert_{D(s_\nu)}&\leq2\mu_*^{\frac{1}{2}},\\\label{eq35}
\left\vert\left\|P_\nu\right\|\right\vert_{D(s_\nu,r_\nu)}&\leq\frac{\mu_*^{\frac{1}{2}}}{2^{\nu}},\\\label{eq55}
\left\vert\xi_{\nu+1}-\xi_{\nu}\right\vert&\leq(\frac{\mu_*^{\frac{1}{2}}}{2^{\nu}})^{\frac{1}{L}}.
\end{align}
\item[\rm(2)] $\Phi_{\nu+1}:\tilde{D}_{\nu+1}\rightarrow \tilde{D}_{\nu}$ is symplectic, and
\begin{align}\label{eq39}
\left\vert\left\|\Phi_{\nu+1}-id\right\|\right\vert_{\tilde{D}_{\nu+1}}\leq\frac{\mu_*^{\frac{1}{2}}}{2^{\nu}}.
\end{align}
 Moreover, on $D_{\nu+1}$,
\begin{equation*}
H_{\nu+1}=H_\nu\circ\Phi_{\nu+1}=N_{\nu+1}+P_{\nu+1}.
\end{equation*}
\end{description}
\end{lemma}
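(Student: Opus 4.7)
The plan is to prove Lemma \ref{leb8} by induction on $\nu$, showing that the one-cycle KAM construction of Section \ref{sec:b3} can be iterated indefinitely with geometrically decaying error. The base case $\nu=0$ is already established: the initial setup in Section 3.1 provides $H_0, N_0, e_0=0, \bar h_0=0, P_0$ satisfying $|\!\|P_0\|\!|_{D(s_0,r_0)} \le \gamma_0^{n+m+2}s_0^m\mu_0$, so (\ref{eq31})--(\ref{eq55}) are trivial for $\nu=0$ (with $\Phi_0 = id$) once we observe that $\mu_0 \le \mu_*^{1/2}$ for $\varepsilon$ small, because $K_1$ depends only mildly on $\log(1/\mu_0)$ and $\mu_0 = \varepsilon^{1/(8\eta(\tau+1)(m+1))}$.

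For the inductive step, I assume that properties $(1)$ and $(2)$ hold at stages $0, 1, \dots, \nu$, and apply the one-cycle construction of Section \ref{sec:b3} at stage $\nu$ to produce $\Phi_{\nu+1}, N_{\nu+1}, P_{\nu+1}$. The first substantive task is to verify the hypotheses \textbf{(H1)}--\textbf{(H7)} at stage $\nu$. For \textbf{(H1)}, the choice $K_{\nu+1} = ([\log(1/\mu_\nu)]+1)^{3\eta}$ together with $r_\nu - r_{\nu+1} = r_0/2^{\nu+2}$ makes $\int_{K_{\nu+1}}^\infty t^n e^{-t(r_\nu-r_{\nu+1})/16}\,dt \le \mu_\nu$ once $\mu_0$ is small. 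Hypotheses \textbf{(H2)} and \textbf{(H4)} follow by summing the inductive bounds (\ref{eq33}), (\ref{eq34}) and the bounds on $p_{01}^j$ from Lemma \ref{leb1}: $|\!\|\sum_{j=0}^\nu p_{01}^j\|\!| \le c\sum_j \mu_j < \mu_0^{1/2}$, since the geometric/super-exponential decay $\mu_{j+1}=8^m c_0 \mu_j^{1+\rho}$ produces a convergent series dominated by its first term. Hypothesis \textbf{(H3)} is a direct consequence of the definition of $s_\nu$ and $K_{\nu+1}$, using $s_0 = s\gamma_0/(16(M^*+2)K_1^{\tau+1})$ together with $M^* \ge \max|\partial_y^i \bar h_\nu|$ (which itself uses (H2)). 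Finally, \textbf{(H5)}, \textbf{(H6)}, \textbf{(H7)} all reduce to checking that $\mu_\nu^{\rho}\Gamma(r_\nu-r_{\nu+1})^2 \to 0$ super-exponentially, which is the standard KAM small-divisor/small-parameter tradeoff: $\Gamma(r-r_+)$ grows only polynomially in $K_{\nu+1}$ while $\mu_\nu^\rho$ decays super-exponentially.

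Once \textbf{(H1)}--\textbf{(H7)} are verified at stage $\nu$, the estimates (\ref{eq31})--(\ref{eq55}) for stage $\nu+1$ are obtained by applying the one-step lemmas: Lemma \ref{leb3} gives $\xi_{\nu+1} \in B_{c\mu_\nu^{1/L}}(\xi_\nu)$, yielding (\ref{eq55}) after absorbing the constant into $\mu_*^{1/2}/2^\nu$ via the super-exponential decay of $\mu_\nu$. Lemma \ref{leb4} gives $|e_{\nu+1}-e_\nu| \le c_3 s_\nu^m \mu_\nu$ and $|\!\|\bar h_{\nu+1}-\bar h_\nu\|\!|_{D(s_\nu)} \le c_3 s_\nu^m \mu_\nu$, and telescoping these yields (\ref{eq31})--(\ref{eq34}). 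Lemma \ref{leb7} gives the key quadratic-type bound $|\!\|P_{\nu+1}\|\!|_{D_{\nu+1}} \le \gamma_0^{n+m+2} s_{\nu+1}^m \mu_{\nu+1}$, which closes the induction for (\ref{eq35}) since $\gamma_0^{n+m+2} s_{\nu+1}^m \mu_{\nu+1} \le \mu_*^{1/2}/2^{\nu+1}$ follows from the choice of $\mu_0$ and the super-exponential decay. Lemma \ref{leb6} provides both the symplectic property of $\Phi_{\nu+1}$ (as the composition of the Hamiltonian flow $\phi_F^1$ and the translation $\phi$) and the estimate (\ref{eq39}). The domain inclusion $\Phi_{\nu+1}: \tilde D_{\nu+1} \to \tilde D_\nu$ is part of Lemma \ref{leb6}(2).

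The main obstacle will be the careful bookkeeping that translates ``$\le c \mu_\nu$-type bounds'' from the one-step lemmas into ``$\le \mu_*^{1/2}/2^\nu$-type bounds'' for the iteration. Concretely, one must verify that the product $\prod_{j<\nu}(1+\text{error})$ remains bounded and that the auxiliary constant $\mu_*$, defined via $(M^*+2)^{m-1}K_1^{(\tau+1)(m-1)}$, correctly absorbs the loss from the $s_\nu^m$ factor (which itself shrinks like $(\prod_{j<\nu}\mu_j^{2\rho}/8)^m \cdot s_0^m$). This accounting hinges on the super-exponential relation $\mu_{\nu+1} = 8^m c_0 \mu_\nu^{1+\rho}$, which guarantees $\sum_\nu \mu_\nu^{\rho/2} < \infty$ with a rapidly decaying tail; once this is set up, all the $\mu_*^{1/2}/2^\nu$ comparisons follow by choosing $\varepsilon_0$ (hence $\mu_0$) small enough at the outset. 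No further structural input is required, so the lemma follows.
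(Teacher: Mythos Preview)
Your proposal is correct and follows essentially the same approach as the paper: verify hypotheses \textbf{(H1)}--\textbf{(H7)} at each stage using the super-exponential decay $\mu_\nu \lesssim \mu_0/\zeta^\nu$ (the paper makes this explicit by introducing an auxiliary parameter $\zeta\gg 1$ with $\mu_0 < (8^m c_0 \zeta)^{-1/\rho}$), and then read off (\ref{eq31})--(\ref{eq55}) and (\ref{eq39}) from Lemmas \ref{leb4}, \ref{leb6}, \ref{leb7}. One small wording issue: your ``base case'' sentence is slightly misleading, since (\ref{eq31})--(\ref{eq55}) at $\nu=0$ already require performing the first KAM step (verifying \textbf{(H1)}--\textbf{(H7)} at $\nu=0$), not just citing the initial data; but your inductive step covers this correctly.
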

\begin{proof}
The proof amounts to the verification of $\textbf{(H1)}$-$\textbf{(H7)}$ for all $\nu$. For simplicity, we let $r_0=1$.
It follows from $\varepsilon$ small enough that $\mu_0$ is small. So, we see that $\textbf{(H2)}$, $\textbf{(H4)}$-$\textbf{(H7)}$ hold for $\nu=0$.
From (\ref{eqb2}), $\textbf{(H3)}$ holds for $\nu=0$.
According to the definition of $\mu_\nu$, we see that
\begin{equation}\label{eq44}
\mu_\nu=(8^mc_0)^{\frac{(1+\rho)^\nu-1}{\rho}}\mu_0^{(1+\rho)^\nu}.
\end{equation}
Let $\zeta\gg1$ be fixed and $\mu_0$ be small enough so that
\begin{equation}\label{eq29}
\mu_0<(\frac{1}{8^mc_0\zeta})^{\frac{1}{\rho}}<1.
\end{equation}
Then
\begin{align}
\mu_1&=8^mc_0\mu_0^{1+\rho}<\frac{1}{\zeta}\mu_0<1,\notag\\
\mu_2&=8^mc_0\mu_1^{1+\rho}<\frac{1}{\zeta}\mu_1<\frac{1}{\zeta^2}\mu_0,\notag\\
\vdots\notag\\\label{eq28}
\mu_\nu&=8^mc_0\mu_{\nu-1}^{1+\rho}<\cdots<\frac{1}{\zeta^\nu}\mu_0.
\end{align}
Denote
\begin{equation*}
\Gamma_\nu=\Gamma(r_\nu-r_{\nu+1}).
\end{equation*}
We notice that
\begin{equation}\label{eq26}
\frac{r_\nu-r_{\nu+1}}{r_0}=\frac{1}{2^{\nu+2}}.%=\frac{\beta_\nu-\beta_{\nu+1}}{\beta_0}.
\end{equation}
Since
\begin{align*}
\Gamma_\nu&\leq\int_1^\infty t^{3\tau+5}e^{-\frac{t}{2^{\nu+5}}}\\
&\leq(3\tau+5)!2^{(\nu+5)(3\tau+5)},
\end{align*}
it is obvious that if $\zeta$ is large enough, then
\begin{align*}
\mu_\nu^\rho\Gamma_\nu^i<\mu_\nu^\rho(\Gamma_\nu^i+\Gamma_\nu)\leq1,~~~i=1,2,%<\frac{\gamma_{\nu+1}^{n+m+2}}{\gamma_\nu^{n+m+2}}
\end{align*}
which implies that $\textbf{(H7)}$ holds for all $\nu\geq1$, and
\begin{equation}\label{eq27}
\mu_\nu\Gamma_\nu\leq\mu_\nu^{1-\rho}\leq\frac{\mu_0^{1-\rho}}{\zeta^{(1-\rho)\nu}}.
\end{equation}
By (\ref{eq26}) and (\ref{eq27}), it is easy to verify that $\textbf{(H5)}$, $\textbf{(H6)}$ hold for all $\nu\geq1$ as $\zeta$ is large enough and $\varepsilon$ is small enough.

By (\ref{equ10}) in Lemma \ref{leb1} and (\ref{eq28}), we have
\begin{align*}
\left\vert\left\|\sum_{j=0}^{\nu}p_{01}^j\right\|\right\vert_{D(s_\nu,r_\nu)}<c\sum_{j=0}^{\nu}\mu_j<c\sum_{j=0}^{\nu}\frac{1}{\zeta^j}\mu_0<c\mu_0^{\frac{1}{2}},
\end{align*}
which implies $\textbf{(H4)}$.

To verify $\textbf{(H3)}$, we observe by (\ref{eq44}) and (\ref{eq28}) that
\begin{equation*}
\frac{1}{4}\left(M^*+2\right)\mu_{\nu-1}^{2\rho}K_{\nu+1}^{\tau+1}<\frac{1}{2^{\nu+2}},
\end{equation*}
as $\zeta$ is large enough.
Then
\begin{align}
2\left(M^*+2\right)s_\nu K_{\nu+1}^{\tau+1}&\leq\frac{s_{\nu-1}}{4}\left(M^*+1\right)\mu_{\nu-1}^{2\rho}K_{\nu+1}^{\tau+1}\\
&\leq\frac{s_0}{2^{\nu+2}}<\frac{\gamma_0}{2^{\nu+2}}<\gamma_0,
\end{align}
which verifies $\textbf{(H3)}$ for all $\nu\geq1$.

Let $\zeta^{1-\rho}\geq2$ in (\ref{eq29}), (\ref{eq28}). We have that for all $\nu\geq1$
\begin{align}\label{eq30}
c_0\mu_\nu&\leq\frac{\mu_0}{2^\nu}\leq\frac{\mu_*^\frac{1}{2}}{2^\nu},\\\label{eq36}
c_0\mu_\nu\Gamma_\nu&\leq\frac{\mu_0^{1-\rho}}{2^\nu}\leq\frac{\mu_*^\frac{1}{2}}{2^\nu},\\\label{eq37}
c_0s_\nu^{m-1}\mu_\nu&\leq\frac{\mu_0^{1+2\rho(m-1)}s_0^{m-1}}{2^{\nu+3}}\leq\frac{\mu_*}{2^\nu}.
\end{align}
The verification of $\textbf{(H2)}$ follows from $(\ref{eq30})$ and an induction application of $(\ref{equ4})$ in Lemma \ref{leb4} for all $\nu=0,1,\cdots.$

Since $\left(1+\rho\right)^\eta>2$, we have
\begin{align*}
\frac{1}{2^{\nu+6}}\left(\left[\log\frac{1}{\mu}\right]+1\right)^\eta&\geq\frac{1}{2^{\nu+6}}\left(\left(1-\left(1+\rho\right)^\nu\right)\log\left(8^mc_0\right)-\left(1+\rho\right)^\nu\log\mu_0\right)^\eta\\
&\geq-\frac{1}{2^{\nu+6}}\left(1+\rho\right)^{\eta\nu}\left(\log\mu_0\right)^\eta\geq1.
\end{align*}
It follows from the above that
\begin{align*}
&\log\left(n+1\right)!+\left(\nu+6\right)n\log2+3n\eta\log\left(\left[\log\frac{1}{\mu}\right]+1\right)-\frac{1}{2^{\left(\nu+6\right)}}\left(\left[\log\frac{1}{\mu}\right]+1\right)^{3\eta}\\
&\leq\log\left(n+1\right)!+\left(\nu+6\right)n\log2+3n\eta\log\left(\log\frac{1}{\mu}+2\right)-\left(\log\frac{1}{\mu}\right)^{2\eta}\\
&\leq-\log\frac{1}{\mu},
\end{align*}
as $\mu$ is small, which is ensured by making $\varepsilon$ small.
Thus,
\begin{equation*}
\int_{K_{\nu+1}}^\infty t^{n}e^{-\frac{t}{2^{\nu+6}}}dt\leq\left(n+1\right)!2^{\left(\nu+6\right)n}K_{\nu+1}^{n}e^{-\frac{K_{\nu+1}}{2^{\nu+6}}}\leq \mu,
\end{equation*}
i.e. $\textbf{(H1)}$ holds.

Above all, the KAM steps described in Section \ref{sec:b3} are valid for all $\nu$, which give the desired sequences stated in the lemma.

Now, (\ref{eq31}) and (\ref{eq33}) follow from Lemma \ref{leb4}, (\ref{eq30}) and (\ref{eq37}); by adding up (\ref{eq31}), (\ref{eq33}) for all $\nu=0,1,\cdots$, we can get (\ref{eq32}), (\ref{eq34}); (\ref{eq35}) follows from (\ref{equ20}) in Lemma \ref{leb7} and (\ref{eq30}); (\ref{eq55}) follows from (\ref{eqb15}) in Lemma \ref{leb4} and (\ref{eq30}); $(2)$ follows from Lemma \ref{leb6}.

The proof is complete.
\end{proof}
\subsection{Convergence}\label{sub2}
The convergence is standard. For the sake of completeness, we briefly give the framework of proof.
Let
\begin{align*}
\Psi^\nu=\Phi_1\circ\Phi_2\circ\cdots\circ\Phi_\nu,~~~~\nu=1,2,\cdots.
\end{align*}
By Lemma \ref{leb8}, we have
\begin{align*}
D_{\nu+1}&\subset D_\nu,\\
\Psi^\nu&:\tilde{D}_\nu\rightarrow \tilde{D}_0,\\
H_0\circ\Psi^\nu&=H=N_\nu+P_\nu,\\
N_\nu&=e_\nu+\left\langle\omega\left(\xi_\nu\right)+\sum_{j=0}^{\nu-1}p_{01}^j\left(\xi_\nu\right),y\right\rangle+\bar h_\nu(y,\xi_\nu),
\end{align*}
$\nu=0,1,\cdots,$ where $\Psi_0=id$.
Using (\ref{eq39}) and the identity
\begin{align*}
\Psi^\nu=id+\sum_{i=1}^\nu\left(\Psi^i-\Psi^{i-1}\right),
\end{align*}
it is easy to verify that $\Psi^\nu$ is uniformly convergent and denote the limitation by $\Psi^\infty$.

In view of Lemma \ref{leb8}, it is obvious to see that $e_\nu$, $\bar{h}_\nu$, $\xi_\nu$ converge uniformly about $\nu$, and denote its limitation by $e_\infty$, $\bar{h}_\infty$, $\xi_\infty$. By Lemma \ref{leb3}, we have
\begin{align}
  &\omega\left(\xi_1\right)+p_{01}^0\left(\xi_1\right)=\omega\left(\xi_0\right),\notag\\
  &\omega\left(\xi_2\right)+p_{01}^0\left(\xi_2\right)+p_{01}^1\left(\xi_2\right)=\omega\left(\xi_0\right),\notag\\
  &~~~\vdots\notag\\\label{equ6}
  &\omega\left(\xi_\nu\right)+p_{01}^0\left(\xi_\nu\right)+\cdots+p_{01}^{\nu-1}\left(\xi_\nu\right)=\omega\left(\xi_0\right).
\end{align}
Taking limits at both sides of (\ref{equ6}), we get $$\omega\left(\xi_\infty\right)+\sum_{j=0}^{\infty}p_{01}^j\left(\xi_\infty\right)=\omega(\xi_0).$$

Then, on $D(\frac{s_0}{2})$, $N_\nu$ converge uniformly to
\begin{align*}
N_\infty
&=e_\infty+\left\langle\omega(\xi_0),y\right\rangle+\bar{h}_\infty\left(y,\xi_\infty\right).
\underline{}\end{align*}
Hence, on $D\left(\frac{s_0}{2},\frac{r_0}{2}\right)$,
\begin{align*}
P_\nu=H_0\circ\Psi^\nu-N_\nu
\end{align*}
converge uniformly to
\begin{equation*}
P_\infty=H_0\circ\Psi^\infty-N_\infty.
\end{equation*}
Since
\begin{align*}
\left\vert\left\|P_\nu\right\|\right\vert_{D_\nu}\leq\gamma_0^{n+m+2}s_\nu^m\mu_\nu,
\end{align*}
by (\ref{eq30}), we have that it converges to $0$ as $\nu\rightarrow\infty$.
So, on $D(0,\frac{r_0}{2})$,
\begin{align*}
J\nabla P_\infty=0.
\end{align*}
Thus, for the given $\xi_0\in O$, the Hamiltonian
\begin{align*}
H_\infty=N_\infty+ P_\infty
\end{align*}
admits an analytic, quasi-periodic, invariant $n$-torus $\mathbb{T}^n\times\{0\}$ with the Diophantine frequency $\omega(\xi_0)$, which is the corresponding unperturbed toral frequency.

\section{Proof of Theorems \ref{th2} and \ref{th3}}\label{sec:b5}
First, we briefly give the proof framework of Theorem \ref{th2} because it can follow the KAM step in Section \ref{sec:b3}, where we mainly point out the two major differences from the proof of Theorem \ref{th1}. The first one is that the homotopy invariance and excision of topological degree are used to keep the frequency unchanged in the iteration process not by picking parameters because we consider a Hamiltonian not a family of Hamiltonian. The other one is that the transformation defines on a smaller domain because we see the action-variable as parameter and the translation of parameter is equivalent to the action-variable's.

\subsection{Proof of Theorem \ref{th2}}
In this section, we will describe the translation of action variable and state how the frequency can be preserved in the iterative process, which are different from subsection \ref{subb3.3} and subsection \ref{sub3.3}.

Let $\xi_0\in{G}$ be fixed as statement $\rm(A0)$. The Taylor expansion of Hamiltonian (\ref{eq1}) about $\xi_0$ reads
\begin{equation*}
H(y,\xi_0,x)=e_0+\left\langle\omega_0(\xi_0),y-\xi_0\right\rangle+\bar{h}(y-\xi_0)+\varepsilon P(y,x,\varepsilon),
\end{equation*}
where $e_0=h(\xi_0)$, $\omega_0(\xi_0)=\nabla h(\xi_0)$,  $\bar{h}(y-\xi_0)=O(\left\vert y-\xi_0\right\vert^2)$.
Using the transformation $(y-\xi_0)\rightarrow y$ in the above, we have
\begin{equation}\label{eq2}
H(y,\xi_0,x)=e_0+\langle\omega_0(\xi_0),y\rangle+\bar{h}(y,\xi_0)+\varepsilon P(y,x,\xi_0,\varepsilon),
\end{equation}
where $(y,x)$ lies in a complex neighborhood $D(s,r)$.
Denote
\begin{align*}
H_0&=: H(y,x,\xi_0)=N_0+P_0,\\
N_0&=:e_0+\langle\omega_0(\xi_0),y\rangle+\bar{h}_0,\\
P_0&=:\varepsilon P(y,x,\xi_0,\varepsilon).%\\
\end{align*}
Now, suppose that at $\nu$-th step, we have arrived at the following real analytic Hamiltonian:
\begin{equation}\label{eq3}
\begin{aligned}
H&=N+P,~~~~~~~~~~~~~~~~~~~~~~~~~~~~~~~~~~~~\\
N&=e+\langle\omega(\xi),y\rangle+\bar{h}(y,\xi).
\end{aligned}
\end{equation}

Next, we will construct a translation so as to keep the frequency unchanged.
Consider the translation
$$\phi:x\rightarrow x,~~~~~y\rightarrow y+\xi_+-\xi,$$
where $\xi_+$ is to be determined.
Let
$$\Phi_+=\phi_F^1\circ\phi.$$
Then
\begin{align}
H\circ\Phi_+&=N_++P_+,\notag\\\label{eqb39}
N_+&=\bar N_+\circ\phi=e_++\langle\omega_+(\xi_+),y\rangle+\bar h_+(y),\\\label{eqb40}
P_+&=\bar P_+\circ\phi,
\end{align}
where
\begin{align}\label{eqb41}
e_+&=e+\langle\omega(\xi),\xi_+-\xi\rangle+\bar h(\xi_+-\xi)+[R](\xi_+-\xi),\\\label{eqb42}
\omega_+&=\omega(\xi)+\nabla \bar h(\xi_+-\xi)+\nabla[R](\xi_+-\xi),\\\label{eqb43}
\bar h_+&=\bar h(y+\xi_+-\xi)-\bar h(\xi_+-\xi)-\left\langle\nabla \bar h(\xi_+-\xi),y\right\rangle
+[R](y+\xi_+-\xi)\notag\\
&~~~~-[R](\xi_+-\xi)-\left\langle\nabla[R](\xi_+-\xi),y\right\rangle.
\end{align}
As in subsection \ref{sub3.3}, we will show that the frequency can be preserved in the iteration process. The following lemma is crucial to our arguments.
\begin{lemma}\label{lec5}
There exists
$\xi_+\in B_{s\mu^{{1}/{L}}}(\xi)$
such that
\begin{align}\label{equ22}
\omega_+(\xi_+)=\omega(\xi)=\cdots=\omega_0(\xi_0).
\end{align}
\end{lemma}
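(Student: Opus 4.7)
The plan is to transplant the topological-degree argument of Lemma \ref{leb3} into the analytic setting, where the action variable $\xi$ itself plays the role of the KAM parameter. My first step would be to unwind the recursion (\ref{eqb43}) by induction on $\nu$ to establish the identity
\[
\nabla\bar h_\nu(z,\xi_\nu) \;=\; \nabla h(z+\xi_\nu)-\nabla h(\xi_\nu) + \sum_{j=0}^{\nu-1}\bigl(\nabla[R_j](z+\xi_\nu-\xi_j)-\nabla[R_j](\xi_\nu-\xi_j)\bigr).
\]
The base case is immediate from $\bar h_0(z,\xi_0)=h(z+\xi_0)-h(\xi_0)-\langle\nabla h(\xi_0),z\rangle$, and the inductive step follows by differentiating (\ref{eqb43}) and substituting the inductive expression for $\nabla\bar h$. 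Telescoping the relation $\omega_{k+1}(\xi_{k+1})=\omega_k(\xi_k)+\nabla\bar h_k(\xi_{k+1}-\xi_k,\xi_k)+\nabla[R_k](\xi_{k+1}-\xi_k)$ over $k=0,\dots,\nu-1$ then collapses the sum to the clean form
\[
\omega_\nu(\xi_\nu) \;=\; \nabla h(\xi_\nu) + \sum_{j=0}^{\nu-1}\nabla[R_j](\xi_\nu-\xi_j),
\]
so that, under the inductive hypothesis $\omega_\nu(\xi_\nu)=\omega_0(\xi_0)$, equation (\ref{equ22}) reduces to seeking $\xi_+\in G^o$ with
\[
g(\xi_+) \;:=\; \nabla h(\xi_+) + \sum_{j=0}^{\nu}\nabla[R_j](\xi_+-\xi_j) \;=\; \nabla h(\xi_0),
\]
the exact analogue of (\ref{equ2}).

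At this point I would apply Brouwer degree theory just as in Lemma \ref{leb3}. Hypothesis (A0) (specialized to $\omega=\nabla h$, $O=G$) gives $\deg(\nabla h,G^o,\nabla h(\xi_0))\neq 0$. The Cauchy-type bound coming from (\ref{equ10}) produces $\bigl|\sum_{j=0}^{\nu}\nabla[R_j]\bigr|\leq c\mu_0^{1/2}$, the analogue of hypothesis \textbf{(H4)}, while (A1) ensures $|\nabla h(\eta)-\nabla h(\xi_0)|\geq\sigma\,\mathrm{dist}(\xi_0,\partial G)^L$ on $\partial G$. Hence the linear homotopy $g_t=\nabla h+t(g-\nabla h)$ avoids the value $\nabla h(\xi_0)$ on the boundary, and homotopy invariance yields $\deg(g,G^o,\nabla h(\xi_0))\neq 0$, producing at least one zero $\xi_+\in G^o$ of $g-\nabla h(\xi_0)$.

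To localize $\xi_+$ in $B_{s\mu^{1/L}}(\xi)$, I would subtract the relation at $\xi$ from the new one at $\xi_+$ to obtain
\[
\nabla h(\xi_+)-\nabla h(\xi) \;=\; -\nabla[R_\nu](\xi_+-\xi) \;-\; \sum_{j=0}^{\nu-1}\bigl(\nabla[R_j](\xi_+-\xi_j)-\nabla[R_j](\xi-\xi_j)\bigr).
\]
By (A1) the left side is at least $\sigma|\xi_+-\xi|^L$. Cauchy estimates on the analytic polynomial averages $[R_j]$, together with (\ref{equ10}), bound $|\nabla[R_j]|$ and its modulus of continuity in terms of $s^{m-1}\mu_j$ and $s^{m-2}\mu_j$ respectively, so the right side is controlled by $cs^{m-1}\mu+c\sum_{j<\nu}\mu_j|\xi_+-\xi|^\beta$. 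Since $0<L\leq\beta$, $m>L+1$, and $\sum\mu_j$ is small, the absorption argument used in (\ref{equ19}) delivers $|\xi_+-\xi|\leq s\mu^{1/L}$.

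The main obstacle I anticipate is the bookkeeping behind the unwinding identity: translations from distinct KAM steps must be composed carefully so that the gradient of $\bar h_\nu$ genuinely telescopes to the compact form above. Once that identity is secured, the weak-convexity (A1) and topological-degree (A0) machinery developed for Lemma \ref{leb3} transfers almost verbatim, the only genuinely new feature being the extra factor of $s$ in the localization bound that arises from the $s^{m-1}$ Cauchy scaling of the $[R_j]$ gradients in the analytic setting.
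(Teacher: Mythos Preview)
Your proposal is correct and shares the overall degree-theoretic skeleton with the paper's proof, but the two diverge in how they organize the argument and, more substantially, in how they localize $\xi_+$.

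\textbf{What the paper does.} The paper never unwinds the recursion explicitly. It works directly with the recursive relation $\omega_+(y)=\omega(\xi)+\nabla\bar h(y-\xi)+\nabla[R](y-\xi)$, considers the homotopy $H_t(y)=(\omega(y)-\omega(\xi))+t(\omega_+(y)-\omega(y))$ on $O^o$, and uses the implicit telescoping $\omega_\nu(y)-\omega_0(\xi_0)=\omega_0(y)-\omega_0(\xi_0)+\sum_i(\omega_{i+1}-\omega_i)(y)$ together with (A1) to show $H_t\neq 0$ on $\partial O$. For the localization, the paper uses \emph{excision}: it shows $|H_1(y)|>0$ for every $y\in O\setminus B_{s\mu^{1/L}}(\xi)$, hence $\deg(H_1,B_{s\mu^{1/L}}(\xi),0)=\deg(H_1,O^o,0)\neq 0$, which places a zero directly in the small ball.

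\textbf{What you do differently.} Your explicit unwinding $\omega_\nu(\xi_\nu)=\nabla h(\xi_\nu)+\sum_{j<\nu}\nabla[R_j](\xi_\nu-\xi_j)$ is a clean and correct bookkeeping device; it makes the role of (A0)--(A1) for $\nabla h$ transparent and spares one from ever introducing the auxiliary maps $\omega_i(y)$ for general $y$. For localization you follow the pattern of Lemma~\ref{leb3} instead: you first obtain $\xi_+\in G^o$ by degree, then subtract the relations at $\xi$ and $\xi_+$ and absorb via (A1). This works, but note one caveat in your bounds: the Cauchy estimates $|\nabla[R_j]|\lesssim s_j^{m-1}\mu_j$ and $|D^2[R_j]|\lesssim s_j^{m-2}\mu_j$ are valid only on $D(s_j)$, whereas a priori $|\xi_+-\xi_j|$ need not lie in that disc. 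Since each $[R_j]$ is a polynomial of degree $\leq m$ with coefficients bounded by $c\gamma_0^{n+m+2}s_j^{m-|\imath|}\mu_j$, one still gets $|\nabla[R_j](z_1)-\nabla[R_j](z_2)|\leq c\gamma_0^{n+m+2}\mu_j|z_1-z_2|$ for bounded arguments, and the absorption goes through with $L\leq\beta\leq 1$; you should state this refinement explicitly. The excision route in the paper sidesteps this subtlety by working from the outside in, though the paper's own line $|\omega_{i+1}(y)-\omega_i(y)|\leq\gamma_0^{n+m+2}s_i^{m-1}\mu_i$ for $y\in\partial O$ has the same imprecision.
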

\begin{proof}
The proof will be completed by an induction on $\nu$. We begin with the case $\nu=0$.
It is obvious that $\omega_0(\xi_0)=\omega_0(\xi_0)$.
Now suppose that for some $\nu\geq0$ we have got
\begin{align*}
\omega_i(\xi_i)&=\omega_{i-1}(\xi_{i-1}),~~~~~\xi_i\in B_{s_{i-1}\mu_{i-1}^{{1}/{L}}}\left(\xi_{i-1}\right),
\end{align*}
where $i=1,2,\cdots,\nu.$
Then, we need to find $\xi_+$ near $\xi$ such that $\omega_+(\xi_+)=\omega(\xi)$.
In view of (\ref{eqb42}), we observe that
\begin{align}\label{eqb44}
\left\vert\omega_+(y)-\omega(y)\right\vert=O(\mu).
\end{align}
We split
\begin{align}\label{equ21}
\omega_+(y)-\omega(\xi)&=(\omega(y)-\omega(\xi))
+(\omega_+(y)-\omega(y)).
\end{align}

Consider homotopy $H_t(y):[0,1]\times O\rightarrow \mathbb{R}^n$,
\begin{align*}
H_t(y)&=:\left(\omega(y)-\omega(\xi)\right)
+t\left(\omega_+(y)-\omega(y)\right).
\end{align*}
For any $y\in\partial O$, $t\in[0,1]$,  by $\rm(A1)$, we have that
\begin{align*}
\left\vert H_t(y)\right\vert&\geq\vert\omega(y)-\omega(\xi)\vert
-\vert\omega_+(y)-\omega(y)\vert\geq\vert\omega_0(y)-\omega_{0}(\xi_0)\vert-\sum_{i=0}^\nu\left\vert\omega_{i+1}(y)-\omega_i(y)\right\vert\\
&\geq\sigma\vert y-\xi_0\vert^L-\sum_{i=0}^{\nu}\gamma_0^{n+m+2}s_i^{m-1}\mu_i>\frac{\sigma\delta^L}{2},
\end{align*}
where $\delta:=\min\{\vert y-\xi_0\vert, \forall y\in\partial O\}$.

So, it follows from homotopy invariance and $\rm(A0)$ that
\begin{align*}
\deg\left(H_1(\cdot),O^o,0\right)=\deg\left(H_0(\cdot),O^o,0\right)\neq0.
\end{align*}
We note by $\rm(A1)$, (\ref{eqb44}) and (\ref{equ21}) that for any $y\in O\backslash B_{s\mu^{{1}/{L}}}(\xi)$,
\begin{align*}
\left\vert H_1(y)\right\vert&=\vert\omega_+(y)-\omega(\xi)\vert\geq \vert y-\xi\vert^L-c_1\gamma_0^{n+m+2}s^{m-1}\mu\\
&\geq  s^L\mu-c_1\gamma_0^{n+m+2}s^{m-1}\mu\geq\frac{s^L\mu}{2}.
\end{align*}
Hence, by excision, we have that
\begin{align*}
\deg\left(H_1(\cdot),B_{s\mu^{{1}/{L}}}(\xi),0\right)=\deg\left(H_1(\cdot),O^o,0\right)\neq0,
\end{align*}
i.e., there exists at least a $\xi_+\in B_{s\mu^{{1}/{L}}}(\xi)$, such that
%\begin{align*}
$H_1(\xi_+)=0,$
%\end{align*}
i.e., $$\omega_+(\xi_+)=\omega(\xi),$$ which implies (\ref{equ22}).

The proof is complete.
\end{proof}

In the following, we prove
\begin{align}\label{eqc2}
\phi:D_{\frac{1}{8}\alpha}\rightarrow D_{\frac{1}{4}\alpha},
\end{align}
 which is different from (\ref{eqb18}) in Lemma \ref{leb6}.
Recall that $m>L+1$ and $\alpha=\mu^{\frac{1}{m+1}}$, we have
\begin{align}\label{eqc1}
cs\mu^{\frac{1}{L}}<\frac{1}{8}\alpha s.
\end{align}
For $\forall (y,x)\in D_{\frac{1}{8}\alpha}$, we note by $\xi_+\in B_{s\mu^{{1}/{L}}}(\xi)$ in Lemma \ref{lec5} and (\ref{eqc1}) that
$$\left\vert y+\xi_+-\xi\right\vert <\vert y\vert+\vert \xi_+-\xi\vert <\frac{1}{8}\alpha s+cs\mu^{\frac{1}{L}}<\frac{1}{4}\alpha s,$$ which implies (\ref{eqc2}).

Next, we prove Theorem \ref{th3} by a direct method.
\subsection{Proof of Theorem \ref{th3}}
(1) The unperturbed motion of (\ref{eq1}) is described by the equation
\begin{equation*}
\left\{
\begin{array}{ll}
\dot{y}=0,\\
\dot{x}= h'(y).
\end{array}
\right.
\end{equation*}
The flow is $x=h'(y)t+x_0,y\in G$, where $x_0$ is an initial condition. Notice that
\begin{align*}
h''(0)=0,
\end{align*}
i.e., $h(y)$ is degenerate at $\xi_0=0$. Obviously, by simple calculation, we get
\begin{align*}
\deg\left(h'(y)-h'(0),B_\delta(0),0\right)=0,
\end{align*}
i.e., $\rm(A0)$ fails,
then Theorem \ref{pro2} is not applicable.

Note that the perturbed motion equation is
\begin{equation*}
\left\{
\begin{array}{ll}
\dot{y}=0,\\
\dot{x}=h'(y)+\varepsilon P'(y).
\end{array}
\right.
\end{equation*}
The flow is $x=\left(h'(y)+\varepsilon P'(y)\right)t+x_1,~~~y\in G$, where $x_1$ is an initial condition. To ensure the frequency is equal to $h'(0)$, we need to find a solution of the following equation in $G$:
\begin{align*}
h'(y)+\varepsilon P'(y)=h'(0),
\end{align*}
i.e.,
\begin{align}\label{eqa4}
g'(y)=-\varepsilon P'(y).
\end{align}
Notice that the Taylor expansion of $g'(y)$ at $\xi_0=0$ is
$$g'(y)=g'(0)+g''(0)y+\cdots+g^{2\ell+1}(0)y^{2\ell}+o(y^{2\ell}),$$ then the equation (\ref{eqa4}) is equivalent to
$$g^{2\ell+1}(0)y^{2\ell}+o(y^{2\ell})=-\varepsilon P'(y),$$
which is solvable provided that $\varepsilon P'(y)\, sign \left(g^{2\ell+1}(0)\right)<0$. So the perturbed system admits at least two invariant tori with frequency $\omega=h'(0)$ for the small enough perturbation satisfying $\varepsilon P'(y)\, sign \left(g^{2\ell+1}(0)\right)<0$.
Conversely, if $\varepsilon P'(y)\, sign \left(g^{2\ell+1}(0)\right)>0$, the unperturbed invariant torus with frequency $\omega=h'(0)$ will be destroyed.

(2)
Note that $h(y)$ is degenerate in $\xi_0=0$. Obviously, by simple calculation, we get
\begin{align*}
\deg\left(h'(y)-h'(0),B_\delta(0),0\right)\neq0.
\end{align*}
Then, by Theorem \ref{pro2}, the above persistence result hold. In addition, we can also directly prove this result. Similarly, we need to solve the following equation in $G$:
\begin{align*}
h'(y)+\varepsilon P'(y)=h'(0),
\end{align*}
i.e.,
\begin{align}\label{eqa5}
g'(y)=-\varepsilon P'(y).
\end{align}
Notice that the Taylor expansion of $g'(y)$ at $\xi_0=0$ is
$$g'(y)=g'(0)+g''(0)y+\cdots+g^{2\ell+2}(0)y^{2\ell+1}+o\left(y^{2\ell+1}\right),$$ then the equation (\ref{eqa5}) is equivalent to
$$g^{2\ell+2}\left(0\right)y^{2\ell+1}+o\left(y^{2\ell+1}\right)=-\varepsilon P'(y),$$
whose solution always exists in $G$ for any small enough perturbation. Hence, the perturbed system admits an invariant torus with frequency $\omega=h'(0)$ for any small enough perturbation.

%\backmatter

%\bmhead{Supplementary information}
%
%If your article has accompanying supplementary file/s please state so here.
%
%Authors reporting data from electrophoretic gels and blots should supply the full unprocessed scans for key as part of their Supplementary information. This may be requested by the editorial team/s if it is missing.
%
%Please refer to Journal-level guidance for any specific requirements.

\section{Appendix A. Proof of Proposition \ref{pro1}}\label{appa}

\begin{proof}
Obviously, for $\forall \xi\in (-1,1)\times(-1,1)$,
$$(\omega-\bar{\omega})(-\xi)=-(\omega-\bar{\omega})(\xi),$$
and for $\forall \xi\in\partial(-1,1)\times(-1,1),$
$$(\omega-\bar{\omega})(\xi)\neq0.$$
Using Borsuk's theorem in \cite{guo}, we have
$$\deg\left(\omega(\cdot)-\bar{\omega},(-1,1)\times(-1,1),0\right)\neq0,$$
i.e.,
$$\deg\left(\omega(\cdot),(-1,1)\times(-1,1),\bar{\omega}\right)\neq0,$$ i.e., $\rm(A0)$ holds. For $\xi,\xi_*\in\left[-\frac{1}{2},\frac{1}{2}\right]$, and $\xi\neq\xi_*$, we have $$\omega(\xi)-\omega(\xi_*)=0,$$ but $$\left\vert\xi-\xi_*\right\vert^L>0,~~\forall L>0,$$ which shows that $\rm(A1)$ fails.
Note that the flow of unperturbed motion equation is $$x=\omega(\xi)t+x_0,~~~\xi\in (-1,1)\times(-1,1),$$ where $x_0$ is an initial condition, and the flow of perturbed motion equation is
$$x=\left(\omega(\xi)+\left(0,P_0\left(\varepsilon\right)\right)^\top\right)t+x_0,~~~\xi\in (-1,1)\times(-1,1).$$ In order to keep the frequency $\omega(0)=\bar\omega$ unchanged, we have to solve the following equation $$\omega(\xi)+\left(0,P_0\left(\varepsilon\right)\right)^\top=\bar\omega,$$ i.e.,
$$\omega\left(\xi\right)-\bar\omega\left(\xi\right)=\left(\xi_1, \omega_2-\bar\omega_2\right)^\top=-\left(0,P_0\left(\varepsilon\right)\right)^\top,$$ which implies that the second component $\xi_2$ of solution $\xi$ is discontinuous and alternately appears on $\left(-1,-\frac{1}{2}\right)$ and $\left(\frac{1}{2},1\right)$ as $\varepsilon\rightarrow0_+$. So, this example shows that condition $\rm(A1)$ is necessary no matter how smooth the frequency mapping $\omega(\xi)$ is.
\end{proof}

\section{\textbf{Appendix B. Proof of Theorem \ref{pro2}}}\label{appb}
\begin{proof}
Notice that
\begin{align*}
\nabla h(y)-\nabla h(0)=y\vert y\vert^{2l}.
\end{align*}
For $0<\delta<1$, $B_\delta(0)$ denotes the open ball centered at the origin with radius $\delta$. We have that $\nabla h(y)-\nabla h(0)$ is odd and unequal to zero on $\partial B_\delta(0)$, i.e.,
$$\nabla h(-y)-\nabla h(0)=-y\vert y\vert^{2l}=-(\nabla h(y)-\nabla h(0)),~~\nabla h(y)-\nabla h(0)\neq0,~~\forall y\in\partial B_\delta(0).$$
It follows from Borsuk's theorem in \cite{guo} that,
\begin{align*}
\deg(\nabla h(y)-\nabla h(0),B_\delta(0),0)\neq0.
\end{align*}
Obviously, there exist $\sigma=\frac{\min_{y\in B_\delta(0)}\{{(y+\varrho)\vert y+\varrho\vert^{2\ell}-y\vert y\vert^{2\ell}}\}}{2\varrho^{2\ell+1}}$ and $L=2l+1$ such that
\begin{align*}
\left\vert\nabla h(y)-\nabla h(y_*)\right\vert\geq\sigma\left\vert y-y_*\right\vert^L,~~y_*\in B_\delta(0), y\in B_\delta(0)\setminus B_\varrho(y_*),
\end{align*}
where~$\varrho>0$, $B_\varrho(y_*)\subset B_\delta(0)$.
So, by Theorem \ref{th2}, the perturbed system admits an invariant torus with frequency $\omega$ for any small enough perturbation.
\end{proof}

\section{\textbf{Appendix C. Proof of Proposition \ref{cor1}}}\label{appc}

\begin{proof}
Let $\varepsilon P=\varepsilon y, \varepsilon>0$. Notice that for $y\in G\subset\mathbb{R}^1$,
$$h'(y)=\omega+y^{2\ell},~~h'(0)=\omega,~~h''(y)\big\vert_{y=0}=0,$$
which implies that the Hamiltonian $H$ is degenerate at $y=0$. By the definition of degree, we have for $0<\delta<1$
\begin{align*}
\deg\left(\nabla h(y)-\nabla h(0),B_\delta(0),0\right)=0,
\end{align*}
i.e., $\rm(A0)$ fails. Then, Theorem \ref{th2} cannot be used to prove the persistence result of keeping frequency unchanged.

Note that the flow of unperturbed motion equation at $y=0$ is $$x=\omega t+x_0,$$ where $x_0$ is an initial condition, and the flow of perturbed motion equation is $$x=\left(\omega+y^{2\ell}+\varepsilon\right)t+x_0,~~~y\in G.$$ In order to preserve frequency $\omega$, we need to solve $y^{2\ell}+\varepsilon=0$ in $G$, which has no real solution in $G$. Hence, the persistence result of keeping frequency unchanged fails.

\end{proof}

\section*{Acknowledgments}

The second author (Li Yong) is supported by National Basic Research Program of China (Grant number
[2013CB8-34100]), National Natural Science Foundation of China (Grant numbers [11571065], [11171132], and
[12071175]),
and Natural Science Foundation of Jilin Province (Grant number [20200201253JC]).

%\section*{References}

\end{document}